\numberwithin{equation}{section}
\newdimen\vintkern\vintkern12pt
\def\vint{-\kern-\vintkern\int}
\newtheorem{thm}{Theorem}[section]
\newtheorem{lem}[thm]{Lemma}
\newtheorem{cor}[thm]{Corollary}
\newtheorem{prop}[thm]{Proposition}
\newcommand{\tref}[1]{Theorem~\ref{#1}}
\newcommand{\cref}[1]{Corollary~\ref{#1}}
\newcommand{\R}{\mathbb{R}}
\newcommand{\lip}{\mathrm{Lip}}
\begin{document}

	\pagebreak

	\title{Some regularity of Submetries}
	
\thanks{
	A. L. was partially supported by the DFG grants   SFB TRR 191 and SPP 2026.}

	\author{Alexander Lytchak}
	
	\address
{Institute of Algebra and Geometry\\ KIT\\ Englerstr. 2\\ 76131 Karlsruhe, Germany}
	\email{alexander.lytchak@kit.edu}

\keywords
{Alexandrov space, submetry, singular
Riemannian foliation, equidistant decomposition}
\subjclass
[2010]{53C20, 53C21, 53C23}

	\begin{abstract}
We discuss   regularity statements for  equidistant decompositions of 
 Riemannian manifolds and for the corresponding quotient spaces. 
We show that any stratum of the quotient space
has  curvature locally bounded from both sides. 
	\end{abstract}
	
	\maketitle

	\renewcommand{\theequation}{\arabic{section}.\arabic{equation}}
	\pagenumbering{arabic}

	\section{Introduction}
	 A  \emph{submetry} is a map $P:X\to Y$  between metric spaces  which 
sends open balls $B_r(x)$  in $X$ onto  balls $B_r (P(x))$ of the same radius  in $Y$.
Submetries as  metric generalization of Riemannian submersions have been introduced by  Berestovskii \cite{Berest2}.   Berestovksii and  Guijarro  verified that a submetry between smooth complete Riemannian manifolds always is  a $\mathcal C^{1,1}$ Riemannian submersion, but  it does not need to be $\mathcal C^2$
 \cite{Berest}.  Another example of a submetry is provided  by a distance function 
$P:\R^n\to \R$ to a convex nowhere dense subset $C\subset \R^n$.

Submetries $P:X\to Y$ with given \emph{total space} $X$ are in one-to-one correspondence with equidistant decompositions
of  $X$. The correspondence assigns to $P$ the decomposition of $X$ into fibers of $P$ \cite[Section 2.2]{KL}.    Seen this way, submetries generalize quotient maps for isometric group actions and decompositions of a complete smooth Riemannian manifold into leaves of a singular Riemannian foliation with closed leaves \cite{Molino}, \cite{Thorb},  \cite{Alexandrino-survey}.

Recent appearances
of submetries in many unrelated settings: \cite{Lyt-buildings}, \cite{LPZ}, \cite{Stadler}, \cite{StadlerII} 
\cite{Ber-finite},
 \cite{Mendes-Rad}, \cite{Mendes-Rad2}, \cite{Grovesubmet},
  \cite{GW}, \cite{Rade}, 
	 \cite{GW}, 
 make investigations of the properties of submetries  a  natural task, especially if the total space is a Riemannian manifold.
A systematic study of submetries $P:M\to Y$ with total space a sufficiently smooth Riemannian manifold has been intitiated in \cite{KL}. The present paper continues the investigations of \cite{KL} and improves some regularity statements provided there.

If the total space  $X$ is a connected (sufficiently smooth) complete Riemannian manifold $M$,  the following structural results on the base space $Y$ of a submetry $P:M\to Y$  have been derived in \cite{KL}.

The quotient space $Y$ locally has curvature bounded from 
below,   \cite{BGP}, \cite[Proposition 3.1]{KL}. There is a canonical stratification 
$Y= \cup _{l=0} ^m Y^l$, where $m$ is the dimension of $Y$
and $Y^l$ consists of all points $y\in Y$ such that the tangent space $T_yY$ has $\R^l$ as a direct factor, \cite[Theorem 1.6]{KL}. The subset $Y^l$ is locally convex in $Y$, for any $l$, and  it is an $l$-dimensional manifold.  The maximal-dimensional stratum $Y^m$, the set of
\emph{regular points} of $Y$, is open, dense and convex in $Y$.

For any point $y\in Y$ there exists some  $r>0$, such that exponential map $\exp_y$ is a  well-defined homeomorphism $\exp _y:B_r(0) \to B_r(y)$
between the $r$-ball in the tangent cone $T_yY$ around the origin and the $r$-ball 
in $Y$ around $y$, \cite[Theorem 1.3]{KL}.
This  \emph{injectivity radius} is locally bounded from below on each stratum $Y^l$,
but it goes  to $0$, when points on $Y^l$ converge to a lower-dimensional stratum.

Our first result improves the regularity of the exponential map:

\begin{thm} \label{thm: exp}
Let $Y$ be a base of a submetry $P:M\to Y$ of a Riemannian manifold with locally bounded curvature.
Then, for any $y\in Y$, there exist $r_0,C>0$, such that for all $r<r_0$ the exponential map $\exp _y:B_r(0)\to B_r(y)$ is $(1+Cr^2)$-bilipschitz. 
\end{thm}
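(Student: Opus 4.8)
The plan is to reduce the statement to a semi-local comparison estimate on the manifold $M$ and then push it down through the submetry $P$. The exponential map $\exp_y \colon B_r(0) \to B_r(y)$ factors, via $P$ and the normal exponential map of the fiber $F = P^{-1}(y)$, through the Riemannian exponential map of $M$ at a point $p \in F$. More precisely, since $P$ is a $\mathcal{C}^{1,1}$ Riemannian submersion by Berestovskii--Guijarro, the horizontal space $H_p \subset T_pM$ is identified isometrically with $T_yY$ at the \emph{infinitesimal} level only up to the quotient by the isotropy; but working on a small enough ball and using the description of $T_yY$ in \cite{KL} as the quotient of $H_p$ by the (linear) action associated to holonomy, one sees that $\exp_y$ lifts to the $M$-exponential map $\exp_p \colon H_p \cap B_r(0) \to M$ restricted to horizontal vectors, composed with $P$. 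So the distortion of $\exp_y$ is controlled by the distortion of $\exp_p$ on $M$ together with how $P$ distorts distances between nearby horizontal geodesics emanating from $F$.

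The first step is the standard Riemannian fact: on a manifold with $|\sec| \le \kappa$ on $B_{r_0}(p)$, the exponential map $\exp_p$ is $(1 + C r^2)$-bilipschitz on $B_r(0)$ for $r < r_0$, with $C$ depending only on $\kappa$ and $r_0$. This follows from Rauch comparison applied to Jacobi fields: a Jacobi field $J$ with $J(0) = 0$ satisfies $|J(t)| = |J'(0)| \cdot t \cdot (1 + O(\kappa t^2))$, and integrating the comparison inequalities along radial geodesics gives the bilipschitz bound for the full differential, hence for $\exp_p$ by integration along paths. The second step is to handle the submetry: because $P$ is $1$-Lipschitz and submetric, $d_Y(P(x), P(x')) \le d_M(x, x')$ always, and for points reached by horizontal geodesics from $F$, the reverse inequality $d_Y(P(x),P(x')) \ge d_M(x,x') - (\text{error})$ must be extracted. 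The key point here is that the horizontal distance between two unit-speed horizontal geodesics $\gamma_1, \gamma_2$ issuing from $F$ in directions $v_1, v_2 \in H_p$ agrees, to second order in $r$, with the $Y$-distance between $\exp_y(r v_1)$ and $\exp_y(r v_2)$; the discrepancy comes only from the second fundamental form of $F$ and the $\mathcal{A}$-tensor of the submersion, both of which are bounded on a small ball (using again Berestovskii--Guijarro $\mathcal{C}^{1,1}$-regularity, so the second fundamental form and $\mathcal{A}$-tensor exist almost everywhere and are locally bounded), and each contributes at worst $O(r^2)$ to the distortion.

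Concretely, I would argue as follows. Fix $y$, pick $p \in F = P^{-1}(y)$, and choose $r_0$ small so that $B_{2r_0}(p)$ lies in a region of $M$ with $|\sec| \le \kappa$ and so that the $\mathcal{C}^{1,1}$-geometry of $P$ (mean curvature of fibers, $\mathcal{A}$-tensor) is bounded by some constant $\Lambda$ on that ball. For $v, w \in T_yY$ with $|v|, |w| \le r < r_0$, lift to horizontal vectors $\tilde v, \tilde w \in H_p$ of the same length; then $d_Y(\exp_y v, \exp_y w) \le d_M(\exp_p \tilde v, \exp_p \tilde w) \le (1 + C_1 r^2)|\tilde v - \tilde w| = (1 + C_1 r^2)|v - w|$, using Step 1 and the fact that the infinitesimal identification $H_p \cong T_yY$ is an isometry. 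For the lower bound, I use that $\exp_y$ is a homeomorphism (from \cite[Theorem 1.3]{KL}) together with a first-variation/synchronous-geodesic argument: the $Y$-geodesic from $\exp_y v$ to $\exp_y w$ lifts to a horizontal curve in $M$ of controlled length, and comparing this horizontal curve with the $M$-geodesic, the excess length is $O(\Lambda r^3)$ because the curve stays in $B_{2r}(p)$; dividing by $|v-w|$, which can be as small as we like but the estimate is scale-invariant enough since both the geodesic and its lift have length $\asymp r$, yields $d_Y(\exp_y v, \exp_y w) \ge (1 - C_2 r^2)|v-w|$. Combining gives the $(1 + C r^2)$-bilipschitz bound with $C = \max(C_1, C_2) + o(1)$.

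The main obstacle, I expect, is the lower bound near the singular strata of $Y$: there the injectivity radius of $\exp_y$ tends to zero as noted in the excerpt, and one must ensure that the constants $r_0, C$ can be chosen \emph{locally uniformly}, i.e.\ that the bilipschitz constant $C$ stays bounded even though $r_0$ may shrink. This requires that the bound $\Lambda$ on the fiber geometry and $\mathcal{A}$-tensor, and the curvature bound $\kappa$, are genuinely \emph{local} (bounded on a fixed neighborhood of $y$ in $M$, not blowing up), which is exactly the hypothesis ``locally bounded curvature''; and it requires checking that the error terms are uniformly $O(r^2)$ with constant depending only on these local bounds and not on how close $y$ sits to a lower stratum. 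Verifying this uniformity — essentially, that no additional degeneration of the horizontal geometry occurs as one approaches singular fibers beyond what the curvature bound already controls — is the technical heart of the argument, and is presumably where the improvement over \cite{KL} (which only gave continuity, not a quantitative bilipschitz modulus) is concentrated.
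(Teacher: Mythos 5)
Your outline---factor $\exp_y$ through the $M$-exponential at a point $p$ of the fiber $F=P^{-1}(y)$, control the $M$-distortion by curvature comparison, and then bound the distortion incurred by $P$---matches the paper's broad strategy, and the identity $P\circ\exp_x=\exp_y\circ D_xP$ on horizontal vectors is indeed what the paper uses (see Proposition \ref{prop: bil}). But the execution you propose has two genuine gaps, and the paper's actual argument goes a different route precisely to avoid them.

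First, you invoke Berestovskii--Guijarro to say that $P$ is a $\mathcal{C}^{1,1}$ Riemannian submersion, and then want to bound the $\mathcal{A}$-tensor and the second fundamental form of $F$. Berestovskii--Guijarro applies to submetries between Riemannian \emph{manifolds}; here the base $Y$ is a singular Alexandrov space and the fiber $F$ is merely a set of positive reach, not a priori a submanifold. Neither the $\mathcal{A}$-tensor nor the second fundamental form of $F$ is defined in this setting, and the whole content of the theorem lies at the singular points $y$ where those objects do not exist. (Even where a dense regular part is available, you would still need to show that the $\mathcal{A}$-tensor of the regular part and the fiber geometry stay bounded in a fixed ball around $F$, which you assert but do not prove; this is exactly the uniformity problem you flag in your last paragraph as ``presumably the technical heart,'' and it is not resolved by the hypothesis of locally bounded curvature alone.)

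Second, the lower (co-Lipschitz) bound is not a matter of comparing the $Y$-geodesic's horizontal lift with an $M$-geodesic and accounting for an $O(r^3)$ excess: the identification $H^p\cong T_yY$ you appeal to is not an isometry, because $D_pP:H^p\to T_yY$ is itself only a \emph{submetry} and may collapse directions; the ``quotient by holonomy'' cannot be waved away. The paper sidesteps this entirely. It first works at a point $x\in F$ where a neighborhood of $x$ in $F$ is a $\mathcal{C}^{1,1}$-submanifold (such points always exist by Rataj's density theorem), rescales to a ``$P$-almost flat'' normal form (Lemma \ref{lem: summ}), and then proves a quantitative statement about the closest-point projection $\Pi^L$ from nearby fibers onto $L$---namely that $\Pi^L$ is $(1+Cr)$-Lipschitz and $(1+Cr)$-open, and horizontal spaces at nearby points differ by $O(r)$ (Lemma \ref{lem: 4op} and Theorem \ref{thm: semicont}). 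The co-Lipschitz bound for $\exp_y$ is then obtained by showing the restriction $P:Z\to Y$ (where $Z=\exp_x(B_r(0)\cap H^x)$) is $(1+r^2)$-open, via a gradient estimate for the distance function to a fiber and the open mapping lemma of \cite{Lyt-open}. This machinery replaces the submersion-tensor computations you want to make, and is what makes the argument go through in the singular/positive-reach setting where the objects you use are not available. Your sketch would essentially prove only the smooth-manifold-target case of Berestovskii--Guijarro, not the theorem.
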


Here and below, we use the notion of   a \emph{Riemannian manifold with locally bounded curvature} to describe a  manifold without boundary with a continuous Riemannian metric,
which has  curvature bounded locally from above and below  in the sense of Alexandrov see \cite{Ber-Nik}, \cite{KL-both}. Any $\mathcal C^{1,1}$-submanifold of a Riemannian manifold with a $\mathcal C^{1,1}$-Riemannian metric is in this class \cite[Proposition 1.7]{KL-both}.

 Theorem \ref{thm: exp} can be informally   understood as the existence of
 a pointwise  both-sided curvature bound  at any point $y\in Y$. Indeed, for a smooth Riemannian manifold $M=Y$, the optimal number $C$ in the statement of Theorem \ref{thm: exp} is  equivalent  (up to a factor) to  the optimal bound on the norm of the sectional curvatures at $y$.

In Theorem \ref{thm: exp}, the constant  $r_0(y)$ \emph{always} goes to $0$ and $C(y)$ \emph{usually}  goes to infinity, when $y$ converges to a lower stratum, \cite[Proposition 8.9]{KL},  \cite[Theorem 1.1]{Thorb}.
But both constants can be chosen
\emph{locally uniformly}  on any stratum, Theorem \ref{prop: bil} below.  This has the following consequence, which answers \cite[Question 1.12]{KL}:

\begin{cor} \label{cor: expstr}
Let $M$ be a Riemannian manifold with  locally bounded curvature
 and let 
$P:M\to Y$ be a submetry.  Then, any stratum $Y^l$  of $Y$ is a Riemannian manifold with locally bounded   curvature.
\end{cor}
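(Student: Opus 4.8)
The plan is to produce, around every point of $Y^l$, a locally uniform family of $(1+Cr^2)$-bilipschitz charts onto Euclidean balls, and then to invoke the characterization of Riemannian manifolds with locally bounded curvature by two-sided Alexandrov curvature bounds. Fix $l$, a point $y_0\in Y^l$, and a relatively compact neighborhood $U$ of $y_0$ in $Y^l$. By \cite[Theorem 1.6]{KL}, $Y^l$ is an $l$-dimensional topological manifold without boundary which is locally convex in $Y$; shrinking $U$, we may assume that $U$ is convex in $Y$, so that the intrinsic length metric of $Y^l$ on $U$ coincides with the restriction of the metric of $Y$. Since $Y$ locally has curvature bounded from below \cite[Proposition 3.1]{KL}, the convex subset $U$ inherits a lower curvature bound; hence $Y^l$ has curvature locally bounded from below.

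For the upper bound, fix $y\in U$. The tangent cone $T_yY$ has $\R^l$ as a distinguished Euclidean direct factor, and this factor is tangent to the stratum: geodesics of $Y$ issuing from $y$ in directions of the Euclidean factor $\R^l\times\{0\}$ remain in $Y^l$, while every point of $Y^l$ near $y$ is joined to $y$ by such a geodesic; consequently $\exp_y$ restricts to a homeomorphism from a small ball around the origin in $\R^l\times\{0\}$ onto a neighborhood of $y$ in $Y^l$ (this is part of the local structure of strata developed in \cite{KL}). By Theorem~\ref{prop: bil}, the constants $r_0$ and $C$ in the bilipschitz estimate of Theorem~\ref{thm: exp} may be chosen independently of $y\in U$. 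Restricting the $(1+Cr^2)$-bilipschitz map $\exp_y:B_r(0)\to B_r(y)$ to the Euclidean factor, and using convexity of $U$ to replace the ambient metric by the intrinsic one, we obtain, for every $y\in U$ and every $r<r_0$, a $(1+Cr^2)$-bilipschitz homeomorphism from a Euclidean $r$-ball onto a neighborhood of $y$ in $Y^l$. A standard comparison argument, which underlies the characterization in \cite{Ber-Nik}, \cite{KL-both}, then shows that $Y^l$ has curvature bounded from above on $U$. Combined with the lower bound, $Y^l$ is a topological manifold without boundary whose length metric has curvature locally bounded from both sides in the sense of Alexandrov; by \cite{Ber-Nik}, \cite{KL-both}, any such space is a Riemannian manifold with locally bounded curvature. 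Since $y_0$ was arbitrary, the Corollary follows.

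The step I expect to require the most care is the identification, uniformly along $U$, of the Euclidean factor of $T_yY$ with the tangent cone of $Y^l$ at $y$: one must verify that moving along the stratum and moving in the Euclidean directions of the tangent cone agree, so that the restriction of $\exp_y$ to the Euclidean factor is genuinely a chart for $Y^l$, with all the estimates of Theorem~\ref{prop: bil} preserved. Granting this, the remaining implications --- uniform $(1+Cr^2)$-bilipschitz Euclidean charts yield a two-sided Alexandrov bound, and a manifold with two-sided Alexandrov bounds is a Riemannian manifold with locally bounded curvature --- are by now standard, and covered by \cite{Ber-Nik}, \cite{KL-both}.
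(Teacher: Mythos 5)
Your overall strategy coincides with the paper's: inherit the lower curvature bound from $Y$ via local convexity of $Y^l$, and obtain the upper bound from the $(1+Cr^2)$-bilipschitz estimate of Theorem~\ref{prop: bil} applied uniformly along the stratum (uniformity coming from Lemma~\ref{lem: stratauni}). The identification of the Euclidean factor of $T_yY$ with $T_yY^l$, which you flag as the delicate point, is in fact already established in \cite[Theorem 11.1]{KL} and is not where the difficulty lies.

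The genuine gap is the step you dispose of as ``a standard comparison argument, covered by \cite{Ber-Nik}, \cite{KL-both}.'' The implication ``$(1+Cr^2)$-bilipschitz exponential charts at every point imply a two-sided Alexandrov bound'' is not available as a black box in those references; it is precisely the substance of the paper's proof. A bilipschitz chart by itself does not give the CAT($\kappa$) triangle comparison, because you must compare geodesics, not just distances. What makes the argument work is the additional structure of $\exp_y$: it maps radial segments to geodesics from $y$ and concentric spheres to distance spheres. The paper uses this to build a map $f=\exp_{\bar y}\circ I\circ\exp_y^{-1}$ into the round sphere $\mathbb{S}^l$ (the model space, not Euclidean space), where the $(1+\varepsilon r^2)$-bilipschitz bound on $\exp_y$ is played off against the $(1-\tfrac{1}{10}s^2)$-Lipschitz contraction of the sphere's exponential map along concentric spheres. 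This gives $\|D_zf\|\le 1$ at a.e.\ point of differentiability (Rademacher), and a Fubini-plus-lower-semicontinuity-of-length argument upgrades this to $f$ being globally $1$-Lipschitz, which is exactly the CAT($1$) comparison on a small convex ball in $Y^l$. None of this is routine, and without supplying it your argument only establishes the lower curvature bound. You should either carry out this comparison argument or cite a concrete result that performs it; a vague appeal to the characterization theorem does not close the gap.
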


 For smooth Riemannian manifolds $M$ the result will be strengthened in the continuation \cite{LWil}.  If $M$ is analytic,  
the analyticity of the maximal stratum  $Y^m$ has been verified in \cite{Lem}.

In general, fibers of a submetry $P:M\to Y$ can be arbitrary subsets of positive reach in $M$ (this is a common generalization of convex subsets and 
$\mathcal C^{1,1}$ submanifolds \cite{Federer}, \cite{Ly-conv}, \cite{Rataj}).
However, most fibers are $\mathcal C^{1,1}$-submanifolds and any fiber  $L$
of $P$ contains a $\mathcal C^{1,1}$-submanifold, open and dense in $L$. 
A by-product of the proof of Theorem \ref{thm: exp} is the following result saying that for any submetry $P:M\to Y$   the projections from nearby $P$-fibers onto  any manifold $P$-fiber is \emph{almost} a submetry.  We formulate it as a global result for compact fibers and refer to 
Theorem \ref{thm: semicont} for a more general local version.

\begin{prop} \label{prop: almsubm}
Let $P:M\to Y$ be a submetry, where $M$ has locally bounded curvature.
Let $L$ be a fiber of $P$ which is a compact manifold.  Then there exist constants $C,r_0>0$
such that for all fibers  $L'$ of $P$ at distance $r<r_0$ from $L$, the closest point projection  $\Pi^L :L'\to L$ is $(1+Cr)$-Lipschitz and locally $(1+Cr)$-open.
\end{prop}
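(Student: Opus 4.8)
The plan is to work locally in the total space $M$ near a fiber $L$ and exploit the two-sided curvature bound together with the positive reach of fibers. Since $L$ is a compact $\mathcal C^{1,1}$-submanifold, it has a positive reach $\rho>0$ in $M$, and by a standard covering argument on the compact set $L$ it suffices to prove, for each point $p\in L$, a corresponding local estimate in a ball $B(p,\delta)$ with constants $C,\delta$ depending only on the local curvature bounds and on $\rho$; compactness then lets us pass to global constants $C,r_0$. So I would fix $p\in L$, choose a small geodesically convex ball $U=B(p,\delta)\subset M$ on which the sectional curvature is bounded in absolute value by some $\Lambda$, and on which $L\cap U$ is a graph over its tangent space with a uniform $\mathcal C^{1,1}$ bound coming from the reach.

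First I would set up the closest-point projection $\Pi^L$. For a fiber $L'$ with $d(L,L')=r<\rho/2$, equidistance of the decomposition gives that $L'\cap U$ lies in the tube of radius $r$ around $L$, so $\Pi^L$ is well-defined and single-valued there; moreover, because the decomposition is equidistant, the fibers near $L$ are exactly the $r$-level sets of $d(\cdot,L)$, and the $\Pi^L$-preimage of a point $q\in L$ meets $L'$ in points obtained by flowing along the normal geodesics to $L$. The key is to compare $\Pi^L$ with the nearest-point projection $\Pi^L_{g_0}$ in a fixed auxiliary Euclidean (or constant-curvature) chart on $U$: the Jacobian of the Riemannian normal exponential map $\exp^\perp:\nu L\to M$ differs from the Euclidean one by terms controlled by $\Lambda$ and by the second fundamental form of $L$, hence by a factor $1+O(r)$ on the $r$-tube. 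This yields that $\Pi^L:L'\cap U\to L$ is $(1+Cr)$-Lipschitz: a curve in $L'$ of length $\ell$ projects, via the normal-bundle flow, to a curve in $L$ whose length is at most $(1+Cr)\ell$, the factor measuring the spreading/contracting of the tube, which is governed by Jacobi-field estimates on $U$ — here one uses the upper curvature bound to control how much the flow can stretch.

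For the $(1+Cr)$-openness, I would argue in the reverse direction: given $q\in L\cap U$ and a small ball $B(q,s)\cap L$, I need that its $\Pi^L$-image intersected with... rather, that for $q'\in L'$ with $\Pi^L(q')=q$, the set $\Pi^L(B(q',s)\cap L')$ contains $B(q,s/(1+Cr))\cap L$. This follows from the submetry property of $P$ itself: $P(B(q',s)) = B(P(q'),s)$ in $Y$, and points of $B(P(q'),s)$ are represented both by points of $B(q',s)\cap L'$ and, lifting along the shortest normal geodesics back to $L$, by points of $L$ within distance $(1+Cr)^{-1}s$ of $q$ — again because the normal flow distorts distances along $L$ by at most $1+Cr$. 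Equivalently, one checks that the composition of $\Pi^L|_{L'}$ with a local section $L\to L'$ of the normal flow is $(1+Cr)$-bilipschitz, from which both Lipschitz and openness bounds are immediate. The more general local version for non-compact $L$ is Theorem~\ref{thm: semicont}, proved the same way without the covering step.

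The main obstacle I anticipate is the $\mathcal C^{1,1}$-only regularity: $L$ is merely $\mathcal C^{1,1}$ and $M$ has only a continuous metric with Alexandrov two-sided bounds, so Jacobi fields and second fundamental forms exist only in an $L^\infty$/a.e. sense. The estimates on the normal exponential map must therefore be carried out with the comparison-geometry machinery for spaces with two-sided curvature bounds (as in \cite{Ber-Nik}, \cite{KL-both}) rather than with classical Riemannian computations — one needs that the tube map is bilipschitz with constant $1+O(r)$ using only the Alexandrov bounds, which is where the hypothesis ``locally bounded curvature'' on $M$ is essential and where the quantitative $r^2$- versus $r$-dependence has to be tracked carefully.
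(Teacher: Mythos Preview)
Your reduction to a local statement via a finite cover of the compact $L$ is correct and is exactly what the paper does: Proposition~\ref{prop: almsubm} follows in one sentence from the local Theorem~\ref{thm: semicont}(3). The Lipschitz half of the local statement is also fine --- it is the content of Lemma~\ref{lem: radeschi}, equation~\eqref{eq: lip}, which is the rigorous substitute for your Jacobi-field/tube-map sketch under two-sided Alexandrov bounds.

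The gap is in the openness argument. Your first sentence there conflates the submetry $P:M\to Y$ with the projection $\Pi^L:L'\to L$; the identity $P(B(q',s))=B(P(q'),s)$ is a statement about balls in $Y$ and says nothing directly about $\Pi^L$-images in $L$. Your ``equivalently'' clause --- build a $(1+Cr)$-Lipschitz section $\sigma:L\to L'$ of $\Pi^L$ via the normal flow --- is where the real content would have to be, but it does not go through as stated. For $q'\in L'$ with $q=\Pi^L(q')$, the vector $h=\exp_q^{-1}(q')\in H^q$ is horizontal, but for a nearby $p\in L$ the parallel translate $\tilde h$ of $h$ to $p$ need not satisfy $D_pP(\tilde h)=D_qP(h)$: the differentials $D_xP|_{H^x}:H^x\to T_yY$ are submetries, not isometries, and nothing established so far says they vary Lipschitz-continuously along $L$. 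Hence $\exp_p(\tilde h)$ need not lie in $L'$, and the ``normal flow'' does not produce a section into the correct fiber. Moreover, even one section would not suffice: openness must hold at every $q'\in L'$, and when $\dim L'>\dim L$ (which does occur) a single section misses most points of $L'$.

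The paper's route to openness is genuinely different and two-staged. First, Lemma~\ref{lem: 4op} proves a crude local $2$-openness of $\Pi^L|_{L'}$ by the open map theorem: one bounds from below the gradient along $L'$ of the distance to the horizontal slice $\exp_p(H^p)$, using only the qualitative vertical semicontinuity~\eqref{eq: eps}. Second, Theorem~\ref{thm: semicont} upgrades $2$-openness to $(1+Cr)$-openness by a rescaling trick: extend the horizontal geodesic $qq'$ to unit length, apply $2$-openness at that fixed scale to produce a lift over $p$, and then use the comparison estimate for nearby geodesics (Lemma~\ref{lem: karcher}) to slide back to distance $r$ with the sharp constant. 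This mechanism avoids entirely the need to construct Lipschitz horizontal lifts along $L$, which is precisely the step your sketch assumes without justification.
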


Recall that a map $f:X\to Y$ between metric spaces is \emph{ locally  $C$-open},  (other terms used are \emph{Lipschitz open} and \emph{co-lipschitz}) if  for any $z\in X$ there exists $r_0>0$, such that,
 for any $r<r_0$ and any $x\in B_{r_0} (z)$,
$$B_r(f(x))\subset f(B_{Cr} (x))\,.$$

A submetry $P:M\to Y$ is called \emph{transnormal} if all fibers of $P$ are $\mathcal C^{1,1}$-submanifolds. Thus, for transnormal submetries with compact fibers  the conclusion of Proposition \ref{prop: almsubm} is true for all fibers.  Moreover, for transnormal submetries, the constants $C,r_0$  appearing in Proposition \ref{prop: almsubm} and in Theorem \ref{thm: exp} depend 
only on the following data:  A bound on the curvature and the injectivity radius of $M$, a bound on the injectivity radius of $Y$ at $y=P(L)$ and a lower volume bound of $Y$ around $y$, see  Corollary \ref{cor: uniformity} below. This seems to be useful for applications
to the theory of Laplacian algebras  developped by Ricardo Mendes and Marco Radeschi,
\cite{MR-Weyl}.

Theorem \ref{thm: exp} implies that the local decomposition of the  base space $Y$ in strata around a point $y$ corresponds to the decomposition in strata of the 
tangent space at $y$, see Corollary \ref{lem: locinf} below. This has the following 
consequence for \emph{transnormal submetries}:

\begin{cor} \label{cor: last}
Let $P:M\to Y$ be a transnormal submetry, where $M$ has locally bounded curvature.
   Let $\gamma:I\to M$ be a horizontal geodesic.  Then, up to discretely many values $t_i\in I$, the connected component of the fiber of $P$ through $\gamma (t)$ has the same dimension $k=k(\gamma)$ and   $P(\gamma (t))$ is contained in the  stratum $Y^l$, with $l=l(\gamma)$.
\end{cor}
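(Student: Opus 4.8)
The plan is to work infinitesimally along the horizontal geodesic $\gamma$ and transfer the constancy of the stratum and fiber dimension from the tangent spaces to $Y$ and $M$ themselves, using the two already-announced structural facts: that $\exp_y:B_r(0)\to B_r(y)$ is $(1+Cr^2)$-bilipschitz (Theorem \ref{thm: exp}), and that the stratification of $Y$ near $y$ matches the stratification of the tangent cone $T_yY$ (Corollary \ref{lem: locinf}). First I would set $y(t):=P(\gamma(t))$ and observe that $y:I\to Y$ is a horizontal geodesic downstairs; let $l(t)$ be the stratum index with $y(t)\in Y^{l(t)}$ and $k(t)$ the dimension of the component of the $P$-fiber through $\gamma(t)$ (well-defined and constant in $t$ on a neighbourhood where $L'$ is a $\mathcal C^{1,1}$ submanifold, which is all fibers here since $P$ is transnormal). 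A clean way to package $k(t)$ and $l(t)$ is via the tangent cone: at $\gamma(t)$ the horizontal space $H_t$ splits off the tangent to the fiber, and $\dim Y - l(t)$ together with $k(t)$ are read off from the isotropy data of the holonomy/slice representation at $\gamma(t)$ acting on the normal space $\nu_t$ to the fiber.

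The core of the argument is a \emph{semicontinuity in both directions} statement along $\gamma$. On one hand, the function $t\mapsto l(t)$ is lower semicontinuous because $Y^l$ is locally convex and an $l$-manifold: if $y(t_0)\in Y^{l_0}$ then, by Corollary \ref{lem: locinf}, a neighbourhood of $y(t_0)$ in $Y$ looks like $\R^{l_0}\times (\text{cone})$, and nearby points on the geodesic $y(t)$ can only lie in strata $Y^l$ with $l\ge l_0$; similarly the fiber dimension $k(t)$ is upper semicontinuous (nearby fibers can only be of dimension $\ge$ that of a limit fiber component, since the limit is contained in a single component — here transnormality and Proposition \ref{prop: almsubm}, or rather Theorem \ref{thm: semicont}, give that $\Pi^L:L'\to L$ is an almost-submetry, forcing $\dim L'\ge\dim L$ for nearby $L'$). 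On the other hand, I would show the reverse inequalities hold on a dense open subset: the regular directions are dense, so on an open dense set of $t$ one has the generic (minimal) value of $l$ and the generic (maximal) value of $k$. Combining, the set $\{t: l(t)=l_{\min},\ k(t)=k_{\max}\}$ is open and dense in $I$; its complement is closed with empty interior. The final step is to upgrade ``empty interior'' to ``discrete.'' For this I would argue that the exceptional set is locally finite: near any $t_0$, after rescaling by Theorem \ref{thm: exp} the picture converges to the tangent cone at $y(t_0)$, in which $y$ becomes a genuine geodesic ray in a product $\R^{l_0}\times K$ meeting the singular locus $\{0\}\times(\text{sing}\,K)$; a ray in such a product either lies entirely in the singular stratum (excluded on a dense set) or meets it in at most one point, which localizes the exceptional set to isolated points. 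An induction on $\dim Y$ handles the cone $K$.

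The main obstacle I expect is precisely this last step — turning the nowhere-dense exceptional set into a discrete one. Nowhere-denseness is soft and follows from convexity of strata plus density of regular points, but discreteness genuinely needs the infinitesimal rigidity: one must know that a horizontal geodesic cannot be tangent to a lower stratum at a sequence of accumulating parameters without being trapped in it. This is where Theorem \ref{thm: exp} does the real work: the $(1+Cr^2)$-bilipschitz control lets one blow up at an accumulation point $t_*$ and pass to the tangent cone $T_{y(t_*)}Y$, where the statement becomes the (already understood) fact that a geodesic in a Euclidean-times-cone that returns to the apex stratum infinitely often must be contained in it — contradicting density, or reducing the dimension. A secondary technical point is making sure the fiber-dimension statement and the stratum statement are controlled by the \emph{same} exceptional set; this follows because, at a horizontal point $\gamma(t)$, both invariants are determined by the slice representation on $\nu_t$, so a single blow-up controls both simultaneously. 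I would organize the write-up as: (i) reduce to the infinitesimal statement via Theorem \ref{thm: exp} and Corollary \ref{lem: locinf}; (ii) prove semicontinuity of $l(t)$ and $k(t)$; (iii) prove genericity of the extremal values, giving an open dense good set; (iv) prove discreteness of the complement by blow-up and induction on $\dim Y$.
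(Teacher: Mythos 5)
Your proposal takes a genuinely different route from the paper, and the step you yourself flag as ``the main obstacle'' is precisely where your argument is weakest and where the paper does something slicker.

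The paper never needs to upgrade ``nowhere dense'' to ``discrete'' by a blow-up/induction. Instead it invokes a fact already established for transnormal submetries (cited in \secref{sec: trans}, from \cite[Corollary 7.2]{KL}): the projection $\bar\gamma = P\circ\gamma$ of a horizontal geodesic is a \emph{discrete} concatenation of geodesics in $Y$. Discreteness of the break points $s_1,\dots,s_{k-1}$ is thus free. On each open sub-interval $(s_i,s_{i+1})$ the tangent cones $T_{\bar\gamma(t)}Y$ are pairwise isometric by Petrunin's theorem on constancy of tangent cones along the interior of a geodesic; combining this with Corollary \ref{lem: locinf} identifies the iterated tangent cones $T_{\bar\gamma'(t)}(T_{\bar\gamma(t)}Y)$ up to the endpoints, and since the geodesic direction lies in the line factor, $T_{\bar\gamma'(t)}(T_{\bar\gamma(t)}Y)\cong T_{\bar\gamma(t)}Y$ in the interior. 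The remaining work is at the finitely many break points, where the paper matches the incoming and outgoing iterated tangent cones $T_{\bar\gamma^{\pm}(s)}(T_{\bar\gamma(s)}Y)$ by producing an isometry of $T_{\bar\gamma(s)}Y$ sending $\bar\gamma^+$ to $\bar\gamma^-$: the cone over the isometry of $\Sigma_{\bar\gamma(s)}Y$ induced by $-\mathrm{Id}$ on the horizontal unit sphere, using the equivariance of the infinitesimal submetry under $v\mapsto -v$ (\cite[Proposition 12.7]{KL}). In particular, no induction on $\dim Y$ and no genericity/Baire-type argument appear anywhere.

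Your blow-up step is also not watertight as written. To convert ``$\sigma$ is a geodesic ray in $T_{y(t_*)}Y$ that is entirely regular (or entirely singular)'' into a statement about the stratum of $\bar\gamma(t_*+s)$ for small $s>0$ via Corollary \ref{lem: locinf}, you need $\bar\gamma(t_*+s)=\exp_{y(t_*)}(s\,v)$ on a short interval — i.e.\ that $\bar\gamma$ is a genuine geodesic near $t_*$. If break points of $\bar\gamma$ were allowed to accumulate at $t_*$ (which is what you are trying to rule out), the blow-up limit being a ray does not by itself let you pull the conclusion back to finite scale. So your plan is implicitly using the discreteness of break points of $\bar\gamma$ while attempting to prove it — whereas the clean move is to cite it. Separately, you have the semicontinuity of $l$ backwards at one point (``generic (minimal) value of $l$''): by \cite[Lemma 10.1]{KL} the stratum index can only jump \emph{up} near a point, so $l$ is lower semicontinuous and the generic value along $\gamma$ is the largest attained, not the smallest. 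This is a minor slip but worth flagging because it propagates through your step (iii).

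What your approach would buy, if completed carefully, is a proof not relying on the algebraic $-\mathrm{Id}$ symmetry of the horizontal sphere; but at the cost of a more elaborate blow-up/induction scheme, and it still ultimately needs the piecewise-geodesic structure of $\bar\gamma$ as an input. The paper's version is shorter because both ``discrete'' and ``constant across breaks'' are reduced to previously established structural facts about transnormal submetries.
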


  As a related consequence of Proposition \ref{prop: almsubm}, we prove that all holonomy maps between fibers of transnormal submetries are  Lipschitz-open, see Proposition \ref{prop: hol} below.

We mention,  that all results stated here and  below  do not require  completeness of $M$ and are valid for \emph{local submetries}, see   Section \ref{subsec: loc}.

{\bf Acknoweldgements} The author is grateful to Ricardo Mendes and Marco Radeschi  for their interest and helpful comments.

\section{Preliminaries: Manifolds with  bounded curvature}
\subsection{Notation}
By $d$ we denote the distance in metric spaces.
For a subset $A$ of a metric space $X$ we denote by $d_A:X\to \R$ the distance function to  $A$.
A \emph{geodesic} will denote an isometric  (i.e. globally  distance preserving) embedding of an interval.
A local geodesic $\gamma:I\to X$ is a curve whose restrictions to small sub-intervals are geodesics.

\subsection{Curvature bounds and bounds of geometry}
We assume some familiarity with spaces with curvature bounded in the sense of Alexandrov. We refer the reader to \cite{Ber-Nik},  \cite{AKP}.

 By a \emph{manifold with locally bounded curvature}  $M$ we mean a length metric space homeomorphic to a manifold without boundary, such that any point $x\in M$ has a convex neighborhood in $M$,  which is a CAT$(\kappa)$ space and an Alexandrov space of curvature bounded  from below by $-\kappa$, for some $\kappa \in \R$.  We allow the manifold $M$ to be non-complete and the value $\kappa$ to be not globally bounded on $M$.

The \emph{distance coordinates} define a $\mathcal C^{1,1}$-atlas on  any such manifold $M$ and the Riemannian metric is Lipschitz continuous in these coordinates, \cite{Ber-Nik}.
Any $\mathcal C^{1,1}$-submanifold $N\subset M$ also has locally bounded curvature in its intrinsic metric \cite[Proposition 1.7]{KL-both}. 

In any manifold $M$ with locally bounded curvature there is a notion of parallel translation along  any Lipschitz curve \cite[Section 13]{Ber-Nik}.

Let $x$ be a point  in a manifold $M$ with locally bounded curvature and let $\rho >0$ be given. We say that the \emph{the geometry is bounded by $\rho$ at $x$} if the following conditions hold true:

The ball $B=\bar B_{\frac {10} { \rho} } (x)$ is compact, convex and uniquely geodesic and the curvature   in $B$ is bounded from below and above by $\pm \frac  { \rho^2} {100}$.

If the geometry of $M$ at $x$ is bounded by $\rho$, then the metric space $\lambda \cdot M$  rescaled by $\lambda>0$ has at $x$ geometry bounded by $\frac  \rho {\lambda } $.

 Let  the geometry of $M$ at $x$ be bounded by $\rho$ and consider the ball $B=B_{\frac 1 \rho}  (x)$.
Consider some distance coordinates on $B$ and the Lipschitz continuous  metric tensor $g$
defining the metric of $B$ in these coordinates.
Then  there exist a sequence of smooth, uniquely geodesic  metrics $g_n$ on $B$ such that the sectional  curvatures of $g_n$ are bounded in norm by $\frac {\rho^2 } {100} $, with the following properties
 \cite[Section 15]{Ber-Nik}: The metric tensors $g_n$ converge to $g$ in $\mathcal C^{0,1}$
and the parallel transport for $g_n$  uniformly converges to the parallel transport  for $g$.

This approximation result allows us to prove  metric statements in the smooth case first and then to obtain the general case by a limiting procedure. Mostly, a more direct but technical explanation is available without using the approximation theorem.  
The main additional tool available in the smooth situation are Jacobi-fields, which  only have almost everywhere analogues in the general case.
Readers not aquainted 
with the theory of \cite{Ber-Nik} may always assume the  total manifold $M$ to be smooth.
We prefer to stick to the more general setting of manifolds with locally bounded curvature,  since this setting seems to be  appropriate for the study of submetries,
see \cite{KL}.

\subsection{Comparison of tangent vectors at different points}
Let $M$ be a Riemannian manifold  with locally bounded curvature. Let  $O\subset M$ be open, uniquely geodesic and  convex.  Given $x,z\in O$    and vectors $v\in T_xO, w\in T_zO$,  we define  $|v-w|$ to be the distance in $T_xO$ between $v$ and the parallel transport $w'$ of $w$ to $T_xO$ along the geodesic $xz$.

This "quasi-distance" is symmetric but satifies the triangle inequality only up to a defect depending on the geometry  of $O$, see \eqref{eq: tri} below.

For  linear subspaces $W_x\subset T_xO$ and $W_z\subset T_zO$ with $\dim (W_x)=\dim (W_z)$, we  denote by $|W_x-W_z|$ the  symmetric "quasi-distance":
$$|W_z-W_x|:= \sup \{d(W_x,w')  \}\,.$$
Here, the distance $d(W_x,w')$ to the subspace $W_x$ is measured in $T_xO$, and the supremum
is taken over all parallel translates $w'\in W_x$ of unit vectors  $w\in W_z$ along the geodesic $xz$.

\subsection{Almost flat domains} 
We fix $\varepsilon = 10^{-4}$ for the rest of the paper.

We say that $M$ is \emph{almost flat} at $x\in M$ if the geometry of $M$  at $x$ is bounded  by $\varepsilon$.

If $M$ has geometry bounded by $\rho$ at $x$ then, for any $\lambda \geq \frac \rho {\varepsilon }$, the rescaled manifold $\lambda  \cdot M$ is almost flat at $x$.

Let $M$ be almost flat as $x$ and  consider the open ball $O=B_{10} (x)$.  Thus, $O$
is convex and uniquely geodesic and the curvature in $O$ is bounded from both sides by $\pm  \frac {\varepsilon ^2} {100}= \pm 10 ^{-10}$.

We refer to \cite[Section 6]{BK} for the estimates  stated below.

For any ball $B_r(x) \subset O$, the exponential  map $\exp _x :B_r(0)\to B_r(x)$ is
 $(1+{\varepsilon} ^2\cdot  r^2)$-bilipschitz on  $B_r(0) \subset T_xO$, \cite[Proposition 6.4]{BK}.

For a triangle $\Delta=xyz \subset O$ with two sides of length $l_1,l_2$,
the holonomy along $\Delta$  of any  $v \in T_xO$ satisfies \cite[Section 6.2.1]{BK}
\begin{equation} \label{eq: holonomy}
||Hol ^{\Delta} (v)-v||  \leq     \varepsilon ^2 \cdot l_1\cdot l_2 \cdot ||v||\;.
\end{equation}
For such $\Delta$ and arbitrary 
$v_x\in T_xO, v_y\in T_yO, v_z \in T_zO$, set $a:=\min \{||v_x||,||v_y||,||v_z||\}$. 
Then  the holonomy bound  \eqref{eq: holonomy} implies a  triangle inequality with a defect:
\begin{equation} \label{eq: tri}
|v_x-v_z|\leq |v_x-v_y |+|v_y-v_z| +  \varepsilon  ^2\cdot l_1\cdot l_2\cdot a\;.
\end{equation}

The next result is a direct consequence of  \cite[Proposition 6.6]{BK}.
\begin{lem} \label{lem: section2} 
Assume  $B_4(x)\subset O$ and $u,w\in T_xO$ with $||u||, ||w|| < 2$.
 Set $z=\exp _x (w)$ and
$p=\exp _x(u)$ and $q=\exp _x (w+u)$.  Let $\tilde u$ be the parallel translate of
$u$ to $z$ and $q':= \exp _z (\tilde u)$.
Then 
\begin{equation} \label{eq: buserkarcher1}
d(q, q')  \leq  \varepsilon  ^2\cdot (||u||+||w||)\cdot ||u|| \cdot ||w||\;.
\end{equation}
Therefore,
\begin{equation} \label{eq: verylast}
\angle qpq' \leq 2 \cdot  \varepsilon ^2 \cdot ||u|| \cdot (||u||+ ||w||).
\end{equation}
And 
\begin{equation}
|w-\exp _p ^{-1} (q')|\leq 2 \cdot  \varepsilon ^2  \cdot (||u||+ ||w||) \cdot ||u||\cdot ||w||.
\end{equation}
\end{lem}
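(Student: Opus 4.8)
The plan is to reduce all three inequalities to the single estimate \eqref{eq: buserkarcher1}, which is quoted directly from \cite[Proposition 6.6]{BK}, and to extract \eqref{eq: verylast} and the last displayed inequality from it by elementary comparison geometry in the almost flat domain $O$. First I would fix the point $z=\exp_x(w)$ and note that, since $\|u\|,\|w\|<2$, all points $p,q,q'$ and $z$ stay in a ball of radius comfortably inside $O=B_{10}(x)$, so that every triangle we form is contained in a convex, uniquely geodesic region with curvature bounded in norm by $10^{-10}$; this legitimizes the use of \eqref{eq: holonomy}, \eqref{eq: tri} and the toponogov-type angle comparison. The point $q'=\exp_z(\tilde u)$ is, by construction, the endpoint of the parallel transport of the geodesic $xp$ along $xz$, so $q$ and $q'$ differ exactly by the "second-order" failure of the flat parallelogram law, which is precisely what \cite[Proposition 6.6]{BK} controls: this gives \eqref{eq: buserkarcher1} with no further work.

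For \eqref{eq: verylast} I would consider the triangle $pqq'$. Its side $pq'$ has length close to $\|w\|$ (again up to an $\varepsilon^2$-error, since $q'=\exp_z(\tilde u)$ and $pz$ is, up to holonomy, the parallel image of a geodesic of length $\|w\|$), and in any case $d(p,q')\geq \|w\|/2$ once $r_0$ is small, because $d(p,q')\ge d(x,z)-d(x,p)-\dots$ — more cleanly, $d(p,q')$ is bounded below by a definite fraction of $\|w\|$ using the almost-flatness. Then the angle $\angle qpq'$ at the vertex $p$ satisfies, by the law of cosines comparison in a space of curvature $\geq -10^{-10}$,
\[
\angle qpq' \;\lesssim\; \frac{d(q,q')}{d(p,q')}
\]
up to a multiplicative constant arbitrarily close to $1$, and plugging in \eqref{eq: buserkarcher1} and $d(p,q')\gtrsim\|w\|$ yields $\angle qpq'\le 2\varepsilon^2\|u\|(\|u\|+\|w\|)$. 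The factor $2$ is exactly the slack needed to absorb the comparison constant and the lower bound on $d(p,q')$; I would make this bookkeeping explicit but it is routine.

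For the last inequality, $|w-\exp_p^{-1}(q')|$, I would first observe that $\exp_p^{-1}(q)$ is, up to holonomy along the triangle $xpq$, the parallel transport of $w=\exp_x^{-1}(z)$ to $p$ — indeed $q=\exp_x(w+u)=\exp_p(\text{holonomy of }w)$ up to the $\varepsilon^2$-error from \eqref{eq: holonomy} applied with side lengths $\le\|u\|,\|w\|$ — so $|w-\exp_p^{-1}(q)|\le \varepsilon^2\|u\|\|w\|\cdot(\text{something}\le 1)$. Then I would pass from $q$ to $q'$: the two vectors $\exp_p^{-1}(q)$ and $\exp_p^{-1}(q')$ in $T_pO$ have nearly equal length $\approx\|w\|$ and make an angle $\le\angle qpq'$ bounded by \eqref{eq: verylast}, so their difference has norm at most (roughly) $\|w\|\cdot\angle qpq' + |\,\|pq\|-\|pq'\|\,| \lesssim \varepsilon^2\|u\|\|w\|(\|u\|+\|w\|)$. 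Combining with the triangle inequality $|w-\exp_p^{-1}(q')|\le |w-\exp_p^{-1}(q)|+|\exp_p^{-1}(q)-\exp_p^{-1}(q')|$ and collecting the dominant terms gives the claimed bound $2\varepsilon^2(\|u\|+\|w\|)\|u\|\|w\|$, again with the explicit $2$ soaking up the accumulated comparison constants. The only genuinely delicate point is the lower bound $d(p,q')\gtrsim\|w\|$ used for \eqref{eq: verylast}: it could degenerate if $q'$ were close to $p$, but almost-flatness together with $d(x,z)=\|w\|$ and $d(x,p)=d(z,q')=\|u\|$ forces $d(p,q')$ to be within an $\varepsilon^2$-error of $\|w\|$, so this is the one estimate I would state carefully before everything else falls out mechanically.
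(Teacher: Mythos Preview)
Your proposal is essentially correct and in fact more detailed than the paper, which gives no proof at all: the lemma is simply labelled ``a direct consequence of \cite[Proposition 6.6]{BK}'' and the three displayed inequalities are stated without argument. Your plan---take \eqref{eq: buserkarcher1} from \cite{BK} and derive the other two by elementary comparison in the almost flat ball---is exactly how one fills in those details.

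Two small clean-ups. First, for \eqref{eq: verylast} you do not need the lower bound on $d(p,q')$ that you flag as ``the only genuinely delicate point'': use $d(p,q)$ instead. Since $p=\exp_x(u)$ and $q=\exp_x(u+w)$, the $(1+\varepsilon^2 r^2)$-bilipschitz property of $\exp_x$ on $B_4(0)$ gives $d(p,q)\ge \|w\|/(1+16\varepsilon^2)$ immediately, with no case analysis. The law of sines (in the comparison space) then yields $\angle qpq'\le (1+o(1))\,d(q,q')/d(p,q)$, and the factor $2$ is ample. Second, in your derivation of the last inequality you write ``$q=\exp_p(\text{holonomy of }w)$ up to the $\varepsilon^2$-error from \eqref{eq: holonomy}''. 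This is not an application of the holonomy bound \eqref{eq: holonomy}; it is a second application of \eqref{eq: buserkarcher1} with the roles of $u$ and $w$ interchanged (so that the base point becomes $p=\exp_x(u)$ and the transported vector is $\tilde w$). That gives $d(q,\exp_p(\tilde w))\le \varepsilon^2(\|u\|+\|w\|)\|u\|\|w\|$, and then the bilipschitz bound for $\exp_p^{-1}$ together with $d(q,q')$ from \eqref{eq: buserkarcher1} finishes the estimate cleanly, without going through the angle at all. Your route via $\angle qpq'$ also works, but the direct route is shorter and makes the constant $2$ transparent.
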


The estimate  \eqref{eq: buserkarcher1} implies:

\begin{cor} \label{cor: buserkarcher}
Let $w,u\in T_xO$ be given with $||w||= 1$.  Consider the curve $\eta (t):=\exp _x (u+tw)$.
Then, for all sufficiently small $t$, the  starting direction $w_j$ of the geodesic connecting
$\eta (0)$ and $\eta (t)$ satisfies
 $$|w_j-w|  \leq  2\cdot \varepsilon ^2 \cdot ||u||^2 \;.$$
\end{cor}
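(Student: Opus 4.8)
The plan is to deduce the statement directly from the estimate \eqref{eq: verylast} of Lemma \ref{lem: section2}, applied with a suitable choice of the vectors playing the role of $u$ and $w$ in that lemma. Fix $w,u\in T_xO$ with $\|w\|=1$, and for small $t>0$ set $p:=\eta(0)=\exp_x(u)$ and $q:=\eta(t)=\exp_x(u+tw)$. In the notation of Lemma \ref{lem: section2} I would take the ``displacement'' vector to be $u$ (the base point of $\eta$ is $\exp_x(u)$) and the ``increment'' to be $tw$; more precisely, apply the lemma with its $u$ replaced by $u$ and its $w$ replaced by $tw$, so that the lemma's point $q=\exp_x(u+tw)$ matches ours, and $q'=\exp_p(\widetilde{u})$ where $\widetilde u$ is the parallel translate of $u$ to $z:=\exp_x(tw)$. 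For $t$ small enough we are in the regime $\|u\|,\|tw\|<2$ with $B_4(x)\subset O$, so the lemma applies.

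Next I would extract the estimate on the angle at $p$. By \eqref{eq: verylast} (with the substitution above, and using $\|tw\|=t$),
\begin{equation*}
\angle q p q' \le 2\varepsilon^2\,\|u\|\,(\|u\|+t).
\end{equation*}
Here $q'=\exp_p(\widetilde u)$, so the geodesic $pq'$ has starting direction $\widetilde u/\|\widetilde u\|$, which is exactly the parallel transport to $p$ of the unit vector $-w$... — more carefully, the geodesic from $x$ to $p=\exp_x(u)$ has endpoint-direction at $p$ equal to the parallel transport of $u/\|u\|$, and $q'=\exp_z(\widetilde u)$ lies on the continuation; what we actually need is that the starting direction $w_j$ of the geodesic $pq$ is close to the parallel transport $w'$ of $w$ to $p$ along the geodesic $xp$. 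The angle between $w_j$ and the direction of $pq'$ is bounded by $\angle qpq'$ above, and the direction of $pq'$ agrees with $w'$ up to a holonomy error controlled by \eqref{eq: holonomy} applied to the triangle $x\,z\,p$ with side lengths $t$ and $\|u\|$, giving an error $\le \varepsilon^2\, t\,\|u\|$ in the relevant unit vectors. Combining, $|w_j-w|$ (measured as a quasi-distance at $x$, i.e. after parallel transport) is bounded by a sum of terms each of the form $\mathrm{const}\cdot \varepsilon^2\cdot\|u\|\cdot(\|u\|+t)$, which for $t$ sufficiently small is $\le 2\varepsilon^2\|u\|^2$; absorbing constants and shrinking $t$ further if necessary yields the claimed bound. (Strictly, as $t\to 0$ the leading term is $2\varepsilon^2\|u\|^2$ and the $t$-dependent corrections are lower order, so the clean inequality holds for all sufficiently small $t$.)

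The main obstacle I anticipate is purely bookkeeping: carefully matching the roles of the vectors in Lemma \ref{lem: section2} to the present setup, and tracking which errors are measured as distances between points, which as angles at $p$, and which as the symmetric quasi-distance $|\cdot-\cdot|$ at $x$ after parallel transport. One must also invoke the triangle-type inequality \eqref{eq: tri} (with the appropriate small defect) to pass between the angle bound at $p$ and the quasi-distance bound at $x$, and check that all the defects are of the stated order $\varepsilon^2\|u\|^2$ in the limit $t\to 0$. No genuinely new geometric input is needed beyond Lemma \ref{lem: section2} and the holonomy estimate \eqref{eq: holonomy}; the content is entirely in organizing these two ingredients.
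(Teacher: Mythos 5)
Your overall plan --- reduce the statement to Lemma \ref{lem: section2} applied to the triple $x$, $\eta(0)$, $\eta(t)$ --- is the right one, but the particular substitution $u\mapsto u$, $w\mapsto tw$ creates a genuine gap in the final step. With that substitution, $q'=\exp_z(\tilde u)$ with $z=\exp_x(tw)$ is \emph{not} on the radial geodesic issuing from $p=\eta(0)$: in the Euclidean model $q'=u+tw$, which is off the line $xp$ whenever $w$ is not parallel to $u$ (the phrase ``lies on the continuation'' and the slip ``$q'=\exp_p(\tilde u)$'' both point to this confusion). As a result, your claim that the direction of $pq'$ agrees with $w'$ up to a holonomy error $\leq \varepsilon^2 t ||u||$ is unjustified and, more to the point, quantitatively false: that discrepancy does \emph{not} tend to $0$ as $t\to 0$. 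The third estimate of Lemma \ref{lem: section2} (with $w\mapsto tw$) gives $|tw-\exp_p^{-1}(q')|\leq 2\varepsilon^2(||u||+t)||u||\,t$, which after normalization is an error of order $2\varepsilon^2||u||(||u||+t)\to 2\varepsilon^2||u||^2$, not $o(1)$. Adding this to your angle bound $\angle qpq'\leq 2\varepsilon^2||u||(||u||+t)$ yields roughly $4\varepsilon^2||u||^2$ in the limit, i.e.\ twice the target; the closing remark that ``the $t$-dependent corrections are lower order'' is therefore not correct as written.

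What makes \eqref{eq: buserkarcher1} imply the corollary is the other choice of roles in Lemma \ref{lem: section2}: take the lemma's $u$ to be $tw$ and the lemma's $w$ to be our $u$. Then $z=\exp_x(u)=\eta(0)=p$, the parallel translate $\tilde u$ of $tw$ along $xz=xp$ equals $tw'$ with $w'$ the parallel translate of $w$ to $p$, and $q'=\exp_p(tw')$ is by construction the point at parameter $t$ on the radial geodesic from $p$ with starting direction \emph{exactly} $w'$. Now \eqref{eq: buserkarcher1} reads $d(\eta(t),\exp_p(tw'))\leq \varepsilon^2(t+||u||)\,t\,||u||$, while both $d(p,\eta(t))$ and $d(p,\exp_p(tw'))=t$ are comparable to $t$ (via the bilipschitz bound on $\exp_p$). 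This controls the angle at $p$, hence the quantity $|w_j-w|=||w_j-w'||$, by a single term of order $\varepsilon^2(t+||u||)||u||$, which is $\leq 2\varepsilon^2||u||^2$ for all small $t$ with plenty of room. With this choice only one error term appears, and neither the holonomy estimate \eqref{eq: holonomy} nor the quasi-triangle inequality \eqref{eq: tri} is needed.
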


We will need the following  (definitely not optimal) lemma:

\begin{lem} \label{lem: karcher}
Let $\eta, \gamma:[0,1]\to O$ be geodesics. Set 
 $x_t:=\eta (t)$, $z_t:=\gamma (t)$ and
$a_t=d(x_t,z_t)$. Finally, set
$v_t:=\exp _{x_t} ^{-1} (z_t) \in T_{x_t} M$.

 If  $ 2   > 2a_0 >a_1$  then, for any 
$  1  \geq t >0$,
%$$||a_ -a_0||\leq  Cr$$ 
\begin{equation} \label{eq: lem}
|v_t-v_0| \leq 5 \cdot a_0 \cdot t\;.
\end{equation}
\end{lem}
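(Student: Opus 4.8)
The idea is to bound how fast the "connecting vector" $v_t=\exp_{x_t}^{-1}(z_t)$ can drift from $v_0$ as $t$ increases, using the Busemann–Karcher estimates of Lemma~\ref{lem: section2} and the defect triangle inequality \eqref{eq: tri}. First I would reduce to estimating $|v_t-v_0|$ for a single small increment: fix $t>0$, partition $[0,t]$ into $N$ equal pieces $0=s_0<s_1<\dots<s_N=t$ with $s_{k+1}-s_k=t/N$, and use \eqref{eq: tri} repeatedly to write
\begin{equation*}
|v_t-v_0|\le \sum_{k=0}^{N-1}|v_{s_{k+1}}-v_{s_k}| + \text{(accumulated defect)},
\end{equation*}
where the defect from each application of \eqref{eq: tri} is at most $\varepsilon^2\cdot l_1 l_2\cdot a$ with side lengths and minimal norm all $O(a_0)$ on the ball $O$, hence the total accumulated defect is $O(\varepsilon^2 a_0^2)\cdot$(length of the curves) and is negligible compared to the main term for the crude constant $5$ we are after.

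The main term requires estimating each $|v_{s_{k+1}}-v_{s_k}|$. Here the geometry is: $x_{s_k}$ and $x_{s_{k+1}}$ are joined by a short geodesic segment of length $\approx |\dot\eta|\cdot(t/N)$, and similarly $z_{s_k},z_{s_{k+1}}$, while $v_{s_k}$ points from $x_{s_k}$ to $z_{s_k}$. I would compare $v_{s_{k+1}}$ with the parallel transport of $v_{s_k}$ along $x_{s_k}x_{s_{k+1}}$. Applying Lemma~\ref{lem: section2} (with the roles: base point $x_{s_k}$, the vector $u$ being roughly $v_{s_k}$ of norm $\le a_0<2$, the vector $w$ being the small displacement toward $x_{s_{k+1}}$), one controls the discrepancy between $\exp_{x_{s_{k+1}}}$ of the transported vector and the true point $z_{s_{k+1}}$ by $\varepsilon^2(\|u\|+\|w\|)\|u\|\|w\| = O(\varepsilon^2 a_0^2\cdot \ell_k)$ where $\ell_k$ is the step length, plus a first-order term coming from the actual motion of $z$: the displacement $z_{s_k}\to z_{s_{k+1}}$ has length $\le |\dot\gamma|\ell_k/|\dot\eta|$ or so. Summing, the first-order contributions telescope into something controlled by $|\dot\eta|$, $|\dot\gamma|$ and $a_0$; crucially the hypothesis $2a_0>a_1$ with both $a_0,a_1\le 2$ lets me bound $|\dot\gamma|$ and $|\dot\eta|$ in terms of $a_0$ — intuitively the two geodesics cannot spread apart too violently on the unit scale when their endpoints are this close. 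The factor $5$ is deliberately loose to absorb all these $O(1)$ combinatorial and geometric constants as well as the $\varepsilon^2$ corrections (which, with $\varepsilon=10^{-4}$, are tiny).

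The point I expect to be the main obstacle is extracting the clean bound $|\dot\eta|, |\dot\gamma|\lesssim a_0$ (equivalently the correct normalization of the "velocity" of the drift) from the constraint $2a_0>a_1$ together with the almost-flatness of $O$ — i.e. making precise that, on the almost-Euclidean ball, the map $t\mapsto a_t$ is almost convex (by comparison, since $O$ is CAT$(\varepsilon^2/100)$), so that $a_t\le \max\{a_0,a_1\}\le 2a_0$ throughout $[0,1]$, and then running the increment estimate with this uniform bound on $a_t$. Once the uniform bound $a_t\le 2a_0$ is in hand, the per-step estimates from Lemma~\ref{lem: section2} involve only norms $\le 2a_0<2$, everything stays inside $O$, and the telescoping sum plus defect terms give $|v_t-v_0|\le 5a_0 t$ after choosing $N\to\infty$ to kill the second-order-in-step remainders. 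I would finish by remarking that the same argument, run with $t$ replaced by any sub-interval, gives the stated inequality for all $t\in(0,1]$.
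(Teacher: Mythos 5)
Your approach is genuinely different from the paper's, but as written it contains a gap at the very point you flag as ``the main obstacle.''

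You propose to bound the individual geodesic speeds $|\dot\eta|,|\dot\gamma|$ in terms of $a_0$, and you suggest this should follow from $2a_0>a_1$ together with almost-convexity of $t\mapsto a_t$. This is false: $\eta$ and $\gamma$ can both have large speed (up to the size of $O$) while remaining close to each other, e.g.\ two nearly parallel fast geodesics in Euclidean space with $a_t\equiv a_0$ for all $t$. What the hypotheses actually control is not $|\dot\eta|$ or $|\dot\gamma|$ separately but the \emph{relative} velocity: in the Euclidean model $\dot\gamma-\dot\eta=(z_1-x_1)-(z_0-x_0)=v_1-v_0$, whose norm is at most $|v_1|+|v_0|=a_1+a_0<3a_0$ after parallel transport. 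That is the quantity that makes the per-step increment $|v_{s_{k+1}}-v_{s_k}|\approx|\dot\gamma-\dot\eta|\cdot\ell_k$ sum to something $\lesssim a_0t$. Your telescoping scheme plus the defect inequality \eqref{eq: tri} could in principle be pushed through, but you would first need to isolate and bound this relative-velocity term; the almost-convexity bound $a_t\le 2a_0$ alone does not produce a first-order estimate on $|v_{s_{k+1}}-v_{s_k}|$, since Lemma~\ref{lem: section2} only controls the \emph{second}-order discrepancy, not the drift itself.

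The paper's proof sidesteps the discretization entirely. It introduces one auxiliary ``parallel'' geodesic $\tilde z_t=\exp_{z_0}(t\tilde h)$, where $\tilde h$ is the parallel transport of $\eta$'s initial direction to $z_0$. By almost-flatness (two applications of \eqref{eq: verylast}), the quantity $|\tilde v_t-v_0|$ stays $O(\varepsilon^2a_0t)$ — essentially constant — because $\tilde z_t$ moves in lock-step with $x_t$. The whole first-order drift is then concentrated in $d(z_t,\tilde z_t)$, which is bounded \emph{linearly} in $t$ by evaluating at $t=1$: $d(\tilde z_1,z_1)<4a_0$ follows from $d(x_1,\tilde z_1)<2a_0$ (endpoint of the parallel translate) and $a_1<2a_0$, and the bilipschitz property of $\exp_{z_0}$ propagates this to $d(\tilde z_t,z_t)\le(4+\varepsilon)a_0t$. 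Adding the two contributions gives $|v_t-v_0|<5a_0t$. This is both shorter and avoids the limit $N\to\infty$: the endpoint comparison at $t=1$ does exactly the job that your relative-velocity bound would need to do, and it does so in closed form.
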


\begin{proof}
 Let $h\in T_{x_0} O$  be the starting directions of $\gamma $.
 Let $\tilde h$ be the parallel translation of $h$ to $z_0$. 
Set $\tilde z_t=\exp _{z_0} (t\cdot \tilde h)$ and $\tilde  v_t= \exp _{x_t} ^{-1} (\tilde z_t)$.
From \eqref{eq: verylast} we deduce 
$$d(x_1,\tilde z_1)\leq a_0+  2\varepsilon ^2 (1+a_0)\cdot a_0 <2a_0$$
Thus, $d(z_1,\tilde z_1) < 4a_0$. The bilipschitz property of $\exp _{z_0}$ implies
\begin{equation} \label{eq: zt}
d(\tilde z_t, z_t)\leq (4+\varepsilon)\cdot a_0\cdot t\;.
\end{equation}

Applying \eqref{eq: verylast} again, we deduce
$$|\tilde v_t-v_0|\leq 2\varepsilon ^2\cdot (a_0+t)\cdot t\cdot a_0 < 4\varepsilon ^2 \cdot a_0 \cdot t\,.$$
Together with   \eqref{eq: zt} this implies 
$|v_t-v_0|<5a_0\cdot t$.
\end{proof}

\subsection{Subsets of positive reach}
Let $M$ be a Riemannian manifold with locally bounded curvature. 
A locally closed subset $L\subset M$ has
\emph{positive reach} in $M$ if the closest-point projection $\Pi^L$ in uniquely defined on a neighborhood $U$ of $L$ in $M$.   In this case,  $\Pi^L$ is locally Lipschitz on $U$ and the distance function $d_L$ is $\mathcal C^{1,1}$ on $U\setminus L$ \cite{KL-both}.

  A subset $L$  of positive reach  in $M$ is a topological manifold if and only if $L$ is a $\mathcal C^{1,1}$ submanifold of $M$, \cite[Proposition 1.4]{Ly-conv}.  On the other hand, any set $L$ of positive reach contains a subset $L'$ open and dense  in $L$, which is a $\mathcal C^{1,1}$-submanifold, possibly with components of different dimensions \cite[Theorem 7.5]{Rataj}.

The following   result is essentially contained in  \cite[Theorem 1.6, Theorem 1.2]{Ly-conv}.
It formalizes the  following observation: For a $\mathcal C^{1,1}$ submanifold   a lower  bound on 
the reach is equivalent to an upper bound of the second fundamental form, as well as to
a $\mathcal C^{1,1}$-bound of the submanifold. The proof consists just  of a few citations.

\begin{lem}  \label{lem: radeschi}
There exists  $c>0$ with the following properties.

Let the geometry of $M$ at $p$ be bounded by $\rho$. Let $L$ be a closed subset containing $p$, such that the closest point projection $\Pi=\Pi^L$  onto $L$ is uniquely defined in $B_{\frac {10} {\rho}} (p)$.

Then, 
for  $r\leq \frac 3 \rho$, 
 the  Lipschitz constant of $\Pi$ on $B_r(p)$ is at most
\begin{equation} \label{eq: lip}
\lip (\Pi^{L}  :B_r (p)\to L) \leq 1+ {c} \cdot {\rho}\cdot r\,.
\end{equation}

 If, in addition, $L$ is a $\mathcal C^{1,1}$-submanifold then,  for all $q \in L\cap  B_{r}(p)$,
\begin{equation} \label{eq: cr}
|T_{p}L-T_{q} L|\leq   {c} \cdot  {\rho}  \cdot   r\;.
\end{equation}
\end{lem}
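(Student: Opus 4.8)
\emph{Plan of proof.} The plan is to rescale to a normalized scale and then invoke \cite{Ly-conv}. All three quantities occurring in the statement — the Lipschitz constant of $\Pi^L$, the product $\rho\cdot r$, and the angle-type quantity $|T_pL-T_qL|$ — are invariant under replacing $M$ by $\lambda\cdot M$, while the hypotheses are transported by this rescaling from ``geometry bounded by $\rho$ at $p$, $\Pi^L$ single-valued on $B_{10/\rho}(p)$'' to ``geometry bounded by $\rho/\lambda$ at $p$, $\Pi^L$ single-valued on $B_{10\lambda/\rho}(p)$''. Taking $\lambda=\rho$ I may therefore assume $\rho=1$: then $O:=B_{10}(p)$ is compact, convex, uniquely geodesic with sectional curvature in $[-\tfrac1{100},\tfrac1{100}]$, the closest-point projection $\Pi:=\Pi^L$ is single-valued on $O$, and $r\le 3$. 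It suffices to find a universal $c$ with $\lip(\Pi\colon B_r(p)\to L)\le 1+cr$ for $r\le 3$ and, when $L$ is $\mathcal C^{1,1}$, $|T_pL-T_qL|\le cr$ for $q\in L\cap B_r(p)$; undoing the rescaling then produces the bounds $1+c\rho r$ and $c\rho r$.

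For \eqref{eq: lip} the point is that $\Pi$ being single-valued on $O=B_{10}(p)$ forces a uniform lower bound on the reach of $L$ along $B_3(p)$: for $q\in L\cap B_3(p)$ one has $B_7(q)\subset O$ (as $d(p,q)<3$), so every point of $B_7(q)$ has a unique nearest point in $L$, i.e.\ $L$ has reach $\ge 7$ at $q$; and since $p\in L$, every $q\in B_3(p)$ satisfies $d_L(q)\le d(p,q)<3$, which is below half of that reach. In this configuration \cite[Theorem 1.2]{Ly-conv} applies: $d_L$ is $\mathcal C^{1,1}$ on $B_3(p)\setminus L$ with Hessian controlled by $1/\mathrm{reach}$ and by the curvature of $O$, and consequently the Lipschitz constant of $\Pi$ on small balls $B_s(q)$ is at most $\tfrac1{1-d_L(q)/7}$ up to a curvature correction of order $\tfrac1{100}\cdot s^2$ coming from comparing the second variation of $d_L$ with the Euclidean model (the general case following from the smooth one by the approximation of \cite[Section 15]{Ber-Nik}); this is at most $1+c\,d_L(q)\le 1+c\,d(p,q)$. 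Since $B_r(p)$ is geodesic, integrating this local estimate along geodesics emanating from $p$ gives $\lip(\Pi\colon B_r(p)\to L)\le 1+cr$. Equivalently, one simply quotes the Lipschitz estimate for closest-point projections onto tubes from \cite[Theorem 1.2]{Ly-conv}; only the scale-invariance reduction of the first paragraph is then needed.

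For \eqref{eq: cr} I use that for a $\mathcal C^{1,1}$-submanifold a lower reach bound is quantitatively equivalent to an upper bound on the second fundamental form and to a $\mathcal C^{1,1}$-bound for the submanifold, which is \cite[Theorem 1.6]{Ly-conv}. Thus $\mathrm{reach}(L,q)\ge 7$ on $L\cap B_3(p)$ gives $\|\mathrm{II}_q\|\le c$ there (again up to an $O(\tfrac1{100})$ curvature correction), and then, comparing $T_qL$ with the parallel transport of $T_pL$ along the geodesic $pq$, the defect is bounded by the integral of $\|\mathrm{II}\|$ along $pq$ together with a holonomy defect over a triangle with sides $\le r$ as in \eqref{eq: holonomy}; both contributions are $\le cr$. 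Rescaling back turns $c$ into $c\rho$ in both statements. I expect no genuine obstacle here: the work is entirely bookkeeping — checking scale-invariance of the three quantities, extracting the reach hypothesis of \cite{Ly-conv} on all of $B_{3/\rho}(p)$ from single-valuedness on $B_{10/\rho}(p)$ (using $p\in L$ to keep $d_L$ small), and verifying that the curvature corrections, being $O(\rho^2r^2)$ with $r\le 3/\rho$ and curvature $\le\rho^2/100$, form a fixed fraction of $\rho r$ and so are absorbed into the universal constant $c$.
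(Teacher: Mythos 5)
Your reduction to $\rho=1$ by rescaling is the same as the paper's, and for \eqref{eq: lip} your argument (bound the reach of $L$ along $B_3(p)$, get $\mathcal C^{1,1}$-regularity of $d_L$ with a universal Hessian bound, translate into a Lipschitz estimate for $\Pi$) is a legitimate alternative to the paper's route, which instead extracts a universal semiconcavity constant for $d_L$ from \cite{Kleinjohann} and \cite{KL-both} and then deduces the Lipschitz bound from the contraction estimate $e^{Ct}$ for the gradient flow of $-d_L$ (\cite[Lemma 2.1.4]{Petrunin-semi}). Both routes land on the same estimate; the paper's has the advantage of not having to worry about where $d_L$ is actually $\mathcal C^{1,1}$.

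For \eqref{eq: cr}, however, there is a real gap, and it is precisely the step you dismiss as ``bookkeeping.'' Your sentence ``comparing $T_qL$ with the parallel transport of $T_pL$ along the geodesic $pq$, the defect is bounded by the integral of $\|\mathrm{II}\|$ along $pq$'' is not meaningful as written: $pq$ is the ambient geodesic, which in general does not lie in $L$, so there is nothing to integrate. What you evidently mean is to join $p$ to $q$ by a path $\sigma$ inside $L$, integrate $\|\mathrm{II}\|$ along $\sigma$, and then correct by the holonomy over the loop $\sigma\cup qp$. For this to give $\le c\rho r$ you must first know that such a $\sigma$ exists with $\ell(\sigma)\le C\,d(p,q)$ for a universal $C$, i.e.\ you need a quantitative intrinsic-vs-extrinsic comparison on the (a priori merely $\mathcal C^{1,1}$) submanifold $L$ near $p$. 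That is exactly what the paper has to work for: it transfers the problem to $\R^n$ via uniform $\mathcal C^{1,1}$ distance coordinates (\cite[Theorem 13.2]{Ber-Nik}), uses \cite[Lemma]{Bangert} and \cite[Lemma 3.4]{Rataj} to get a quantitative reach bound for the transferred set, invokes \cite{Ly-reach} for the CAT$(\kappa)$ structure of the intrinsic metric, and \cite[Theorem 1.5]{LSchr} for geodesic extendability — and only then does it quote \cite[Proposition 2.4]{L-note} to produce the Lipschitz bound on $x\mapsto T_xL$. Your proposal compresses all of this into ``no genuine obstacle,'' but it is the content of the lemma; a careful writeup of your $\mathrm{II}$-integration route would have to re-derive most of the same chain.
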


\begin{proof}
Upon rescaling, it suffices to prove the statement just for $\rho=1$.

The distance function  $d_L$ to $L$ is semiconvex on $B_{10} (p)$, see 
\cite[Proposition 1.1, Theorem 1.8]{KL-both}.  Moreover, as shown in  \cite{Kleinjohann}, see also the proof of   \cite[Proposition 1.1, Proposition 1.3]{KL-both},  the semiconvexity constant depends only on the curvature bound and the reach.  Thus, 
there is a universal constant 
$C$ 
such that  $d_L$ is $C$-semiconvex on $B_9(0)$. 

The gradient flow $(x,t) \to \Phi (x,t)$ of $-d_L$  retracts $B_4(p)$ along the shortest geodesics to $L$.   The $C$-semiconcavity of $-d_L$ implies that the map
 $x\to \Phi (x,t)$ is $e^{C\cdot t}$-Lipschitz continuous  \cite[Lemma 2.1.4]{Petrunin-semi}.

We find a constant $c=c(C)$ with $e^{Ct} \leq 1+c\cdot t$, for all $|t|\leq 4$.  Since $\Phi (x,r) =\Pi^L(x)$, for  $d(L,x)\leq r$,  we obtain \eqref{eq: lip},
for all $0<r\leq 4$.

We turn now to  \eqref{eq: cr}.  The existence of some constant $c=c(L,M)$ satisfying \eqref{eq: cr} is equivalent to the property that $L$ is a $\mathcal C^{1,1}$-submanifold. The claim is that $c$  can be chosen independently  of $L$ and $M$  

We fix a sufficiently small (but universally chosen) $1>>\delta >0$ to be determined later.
It is sufficient to prove \eqref{eq: cr} for all 
$r <\delta$. 

We fix  $B:=B_{\frac 1 {10}} (p)$ and some 
 distance coordinates $\Psi:B \to U\subset \R^n$. The distance coordinates are uniform $\mathcal C^{1,1}$  in the following sense, \cite[Theorem 13.2]{Ber-Nik}, \cite[Section 3]{KL-both}. For some constant $C_1=C_1 (C)$:
\begin{itemize}
\item The map $\Psi$ is $C_1$-biLipschitz.
\item For any  $v_1,v_2$ in the tangent bundle  $TB$
$$\frac 1 {C_1}\cdot |v_1-v_2| \leq|| d\Phi (v_1)- d\Phi (v_2)|| \leq C_1 
\cdot |v_1-v_2|\;.$$ 
\item The function   $d_L\circ \Psi^{-1} :U \to \mathbb R$ is $C_1$-semiconvex.
\end{itemize}

The first two properties imply that it suffices to prove the estimate \eqref{eq: cr} on
$L':=\Psi (L)\subset  U$ instead of $L\subset M$.   The third property  implies by \cite[Lemma]{Bangert}, that 
 the closest-point projection onto $L'\subset U$ is uniquely defined on $
B_{\delta '} (p')\subset U$ with $p'=\Psi (p)$, once  $\delta ' =\delta' (C_1)$ is small enough.

Then for some $\delta _1=\delta _1 (\delta')$  the intersection  $K$ of $L'$ with the  closed ball
$B':=\bar B_{\delta _1} (p')$ is a compact set of reach $\geq \delta _1$ 
in the Euclidean space $\mathbb R^n$,  \cite[Lemma 3.4]{Rataj}.
  Thus, it is a CAT$(\kappa)$ space with respect to its intrinsic metric, for some 
$\kappa=\kappa (\delta _1)$  \cite{Ly-reach}. 

 Since $L'$ is a manifold, local geodesics in $K$ starting in $p'$ are extendable as local geodesics  until the relative boundary of $K$ in $L'$ \cite[Theorem 1.5]{LSchr}. Moreover,  these local geodesics are  minimizing  in $K$ on intervals of  length $\delta =\delta (\kappa)$, due to the  CAT($\kappa)$ property. Now a uniform Lipschitz estimate for the map $x\to T_xL'$  in $L'\cap B_{\delta} (p')$ is a direct consequence of
\cite[Proposition 2.4]{L-note}.
\end{proof}

\section{Basics on submetries} 
\subsection{(Local) Submetries} \label{subsec: loc}
Recall that
 $P:X\to Y$ is a  \emph{submetry} if for any $x\in X$ and any  $r>0$ the equality
$P(B_r(x))= B_r(P(x))$ holds. 

The map $P$ is called a \emph{local submetry} if for any $x\in X$ there exists some $s>0$ such that the condition $P(B_r(z))= B_r(P(z))$ holds true for any $z\in B_s(x)$ and any $r<s$. 
 We call $X$ the \emph{total space} and $Y$ the \emph{base} of  the local submetry $P$.

 $P$ is a local submetry if and only if it is locally $1$-Lipschitz and locally
$1$-open.
A restriction of a (local) submetry $P:X\to Y$  to an open subset $O\subset X$ is a local submetry $P:O\to Y$.  A local submetry $P:X\to Y$ is a global submetry, if   $X$ and $Y$ are length spaces and $X$ is proper \cite[Corollary 2.9]{KL}.

Let $P:X\to Y$ be a local submetry and  let $X$ be a length space.
Replacing  $Y$ by $P(Y)$ 
 we may  assume that the local submetries are surjective.  
Replacing the metric on $Y$ by the induced length  metric, $P$ remains a local submetry
\cite[Corollary 2.10]{KL}.
 Thus, we may  assume without loss of generality that the base space $Y$ is a length space.

For a local submetry $P:X\to Y$, a rectifiable curve $\gamma :I\to X$  is \emph{horizontal} (with respect to $P$) if $\ell (\gamma)=\ell (P\circ \gamma)$.

\subsection{Structure of the base} \label{subsec: basestructure}
From now on let $M$ denote a  manifold with locally bounded curvature.
Let $P:M\to Y$ be a surjective local submetry.  Let $y\in Y$ be arbitrary.

  There exists some $r=r(y)>0$, such that
 any geodesic $\gamma:[0,t] \to Y$ starting in $y$ can be extended  to a geodesic $\gamma :[0,r] \to Y$ up to the distance sphere  $\partial  B_r(y)$
\cite[Theorem 1.3]{KL}.
 In this case we will say that the \emph{injectivity radius  at $y$ is at least $r$}.  Under the above assumptions, any point $y'\in B_r(y)$ is connected to 
$y$ by a unique geodesic.

Set $ m=\dim (Y)$.
Then  $Y$ admits a canonical  decomposition  $Y= \cup _{l=0} ^m Y^l$ into \emph{strata} $Y^l$.  Here, $Y^l$ is the set of all points $y\in Y$, for which  the tangent space $T_yY$ splits off $\R^l$ but not $\R^{l+1}$ as a direct factor.  
$Y^l$ is an $l$-dimensional  manifold with a canonical $\mathcal C^{1,1}$-atlas, which is  locally convex
 in $Y$, \cite[Theorem 1.6]{KL}. 
The metric on $Y^l$ is given  by a Lipschitz continuous Riemannian metric;  the tangent space $T_yY^l$ is the maximal Euclidean factor of 
$T_yY$ \cite[Theorem 11.1]{KL}.

For any point $y\in Y^l$, there exists some $r_0=r_0 (y)>0$ with the following properties  \cite[Lemma 10.1, Theorem 11.1]{KL}:
 The open ball $B_{2r_0}(y)$ does not contain  points in $\cup _{i=0}^{l-1}Y^i$
and, for  any $y'\in B_{r_0}(y) \cap Y^l$, the injectivity radius  at $y'$ is at least $r_0$.

\subsection{Fibers}  
Let $P:M\to Y$ be a surjective local submetry.  Let $y\in Y^l\subset Y$.  Then the  fiber $L=P^{-1} (y)$ and the preimage
$S=P^{-1} (Y^l)$  are subsets of positive reach in $M$ \cite[Theorems 1.1,  1.7]{KL}.

Neither $L$ nor $S$ have to be manifolds. 
However, for every $y\in Y\setminus \partial  Y$ the fiber $L=P^{-1} (y)$ is   a $\mathcal C^{1,1}$-submanifold of
$M$ \cite[Theorem 1.8]{KL}.  In particular, this applies to all
$y\in Y^m$ with $m=\dim (Y)$.

\subsection{Infinitesimal structure}
Let  $P:M\to Y$ be  a local submetry, let $x\in M$  be arbitrary,  $y=P(x)$ and denote by $L$ the fiber
$P^{-1} (y)$.

There exists a  differential $D_xP:T_xM\to T_yY$, which is itself a submetry. 
The tangent  space $T_xL$ is  the preimage $D_x P^{-1} (0)$ and it is a convex cone in  $T_xM$  \cite[Proposition 3.3, Corollary 3.4]{KL}.
 We call   $T_xL$  the \emph{vertical space} at  $x$ and denote it by $V^x$.

The \emph{horizontal space} $H^x$ is the dual cone of $T_xL$ in $T_xM$.  The cone  $H_x$ consists of all  $h\in T_xM$ such that
$||h||= |D_xP(h)|$, where  $|\cdot |$ on the right side denotes the distance  to the origin of $T_yY$.

A Lipschitz curve $\gamma:I\to M$ is horizontal if and only if the vector $\gamma'(t)$ is horizontal, for almost all $t\in I$.

\section{$P$-almost flatness} \label{sec-P}
\subsection{A single point}
Let $P:M\to Y$ be a local submetry, where $M$ has locally bounded curvature.  Fix $y\in Y$  and consider $L=P^{-1} (y)$.
Let $x\in L$ be such that a neighborhood of $x$ in $L$ is a $\mathcal C^{1,1}$-submanifold.
% ( The set of such points $x$ is open and dense in $L$). 

For any sequence $z_j \to x$ in $M$, any Gromov--Hausdorff limit of (any subsequence of) the vertical spaces $V^{z_j}$ contains $V^{x}$,  \cite[Corollary 8.4]{KL}.  Thus, we find some $r_1 >0$  such that $B_{r_1} \cap L$  is a $\mathcal C^{1,1}$-submanifold and   such that the following holds true: For any $z\in B_{r_1} (x)$ and any unit vector $v\in V^{x}$, there exists some $v'\in V^z$ with
\begin{equation} \label{eq: eps+}
|v-v'| \leq \varepsilon.
\end{equation}

We call $r_1$ as above the \emph{vertical semicontinuity radius} of $P$ at $x$.

\begin{lem} \label{lem: summ}
There exists 
%some  universal constant
 $\lambda >0$ 
with the following property.

Let $r_1>0$ be given. Let $M$ be a manifold with geometry bounded at $x$ by  $\frac 1 {r_1}$.  
Let $P:M\to Y$ be a local submetry, $y=P(x)$ and $L=P^{-1} (y)$.   Let the vertical semicontinuity radius of $P$ at $x$ and  the injectivity radius of $Y$ at $y$ be at least $r_1$.

Then, upon rescaling $M$ and $Y$ by the constant $\frac {\lambda} {r_1}$, we have
\begin{itemize}
\item $M$ is almost flat at $x$.
\item If  $r\leq 10$, the closed ball $\bar B_r(y)$ is strictly convex in $Y$. 
\item If  $r\leq 10$, the projection $\Pi^L:B_{r} (x)\to L$ has  Lipshitz constant:
\begin{equation}  \label{eq: rename}
\lip (\Pi^{L}  :B_{r} (x)\to L) \leq 1+\varepsilon \cdot r\,.
\end{equation}

\item For all $x_1,x_2$ in the $\mathcal C^{1,1}$-manifold $ B_{10} (x) \cap L$ we have
\begin{equation}  \label{eq: rename2}
|V^{x_1}-V^{x_2}|\leq \varepsilon \cdot d(x_1,x_2)\;.
\end{equation}

\item  
For   any $z\in B_{10} (x)$ and any unit  $v\in V^{x}$, there is  $v'\in V^z$ with
\begin{equation} \label{eq: eps}
|v-v'| \leq \varepsilon.
\end{equation}
\end{itemize}
\end{lem}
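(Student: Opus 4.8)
The plan is to reduce everything to a single rescaling and then invoke the already-assembled toolbox: \lref{lem: radeschi} (with $\rho$ comparable to $1/r_1$), the almost-flatness estimates of \S2.4 (bilipschitz exponential, holonomy, the triangle defect \eqref{eq: tri}), and the defining property \eqref{eq: eps+} of the vertical semicontinuity radius. All five bullet points are scale-covariant statements about the rescaled space, so I would fix the universal constant $c$ from \lref{lem: radeschi}, fix $\varepsilon=10^{-4}$, and then simply choose $\lambda>0$ large enough (and universal) that after rescaling by $\lambda/r_1$ the geometry of $M$ at $x$ is bounded by a number $\le \varepsilon/100$ or so, which in particular makes $M$ almost flat at $x$ on $B_{10}(x)$. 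The key observation is that all the hypotheses — geometry bounded by $1/r_1$, injectivity radius of $Y$ at $y$ at least $r_1$, vertical semicontinuity radius at least $r_1$ — scale linearly, so after rescaling they become: geometry bounded by $\lambda$, injectivity radius $\ge \lambda$, vertical semicontinuity radius $\ge \lambda$; taking $\lambda$ large makes the relevant balls $B_{10}(x)$, $\bar B_{10}(y)$ sit well inside the regions where the cited estimates apply.

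Concretely, I would argue bullet by bullet. \textbf{Almost flatness:} immediate from the rescaling law for bounded geometry stated in \S2.2–2.4: if geometry is bounded by $\rho$ then $\lambda\cdot M$ has geometry bounded by $\rho/\lambda$, so choosing $\lambda/r_1 \ge (1/r_1)/\varepsilon$, i.e. $\lambda\ge 1/\varepsilon$, gives almost flatness. \textbf{Strict convexity of $\bar B_r(y)$ for $r\le 10$:} this uses that the injectivity radius at $y$ is (after rescaling) at least $\lambda\gg 10$, together with the curvature upper bound on $Y$ inherited from $M$ — balls of radius below the injectivity radius in a space with curvature bounded above are strictly convex; this is where I'd need to be a little careful that the CAT$(\kappa)$-type bound on $Y$ with $\kappa$ small (coming from almost flatness of $M$ and the submetry property, cf.\ the structure results of \cite{KL} quoted in \S3) is genuinely available on $B_{10}(y)$. \textbf{Lipschitz bound \eqref{eq: rename}:} apply \eqref{eq: lip} of \lref{lem: radeschi} with $\rho$ equal to the (rescaled) geometry bound, which is $\le\varepsilon/\lambda$ after full rescaling, so $1+c\rho r \le 1+c(\varepsilon/\lambda) r \cdot(\text{stuff})$; choosing $\lambda$ large relative to $c$ turns this into $1+\varepsilon r$. \textbf{Tangent-space estimate \eqref{eq: rename2}:} same, via \eqref{eq: cr}, once we know $B_{10}(x)\cap L$ is a $\mathcal C^{1,1}$-submanifold, which is guaranteed because $r_1$ is at most the vertical semicontinuity radius, so $B_{r_1}\cap L$ and hence (after rescaling so $r_1$ becomes $\ge\lambda>10$) $B_{10}(x)\cap L$ is a $\mathcal C^{1,1}$-submanifold. \textbf{Estimate \eqref{eq: eps}:} this is the content of \eqref{eq: eps+} extended from radius $r_1$ to radius $10$ in the rescaled picture, which is exactly what the rescaling $r_1\mapsto \lambda\cdot(r_1/r_1)=\lambda\ge 10$ achieves — no new argument needed, just bookkeeping of which radius is called $r_1$.

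The main obstacle is purely bookkeeping of constants: one must choose a single $\lambda$ that simultaneously works for all five items, each of which imposes its own lower bound on $\lambda$ (e.g.\ $\lambda\ge 1/\varepsilon$ for almost flatness, $\lambda\ge c/\varepsilon$-type bounds for \eqref{eq: rename} and \eqref{eq: rename2}, $\lambda\ge 10$ for the radius bookkeeping), and verify that none of them are in tension. Since there are finitely many such constraints and each is of the form ``$\lambda$ larger than a universal constant,'' taking $\lambda$ to be the maximum works, and $\lambda$ remains universal. A secondary point requiring a sentence of justification is that the curvature \emph{upper} bound on $Y$ needed for strict convexity of small balls is inherited from that of $M$ under a submetry — I would cite the relevant part of \cite{KL} (or note that a submetry is $1$-Lipschitz and $1$-open, hence does not increase the upper curvature bound on small balls) rather than reprove it. Finally I'd remark that, strictly, \eqref{eq: rename} and \eqref{eq: rename2} need only $r\le 10$ whereas \lref{lem: radeschi} gives estimates up to $r\le 3/\rho$, which after rescaling is $r\le 3\lambda\gg 10$, so the radius restriction is comfortably satisfied.
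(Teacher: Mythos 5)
Your overall plan matches the paper's proof almost exactly: the paper also sets a single universal $\lambda$ (namely $\lambda=10c/\varepsilon$, with $c$ from \lref{lem: radeschi}), rescales by $\lambda/r_1$, obtains almost flatness from the scaling law, deduces \eqref{eq: rename} and \eqref{eq: rename2} by applying \lref{lem: radeschi} (the closest point projection being uniquely defined on $B_\lambda(x)$ because the injectivity radius of the rescaled $Y$ at $y$ is at least $\lambda$), and gets \eqref{eq: eps} directly from the definition of the vertical semicontinuity radius once $\lambda>10$. Your bookkeeping of how the three hypotheses scale, and your remark that the $\mathcal C^{1,1}$-ness of $B_{10}(x)\cap L$ is part of the definition of the vertical semicontinuity radius, are both correct and are exactly what the paper relies on.

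The one genuine gap is your treatment of the strict convexity of $\bar B_r(y)$. You propose to derive it from an upper curvature bound on $Y$ ``inherited from $M$ under a submetry,'' and you offer the justification that a submetry is $1$-Lipschitz and $1$-open and ``hence does not increase the upper curvature bound on small balls.'' This is false: submetries preserve \emph{lower} curvature bounds in the sense of Alexandrov (this is what \cite[Proposition 3.1]{KL} and the Burago--Gromov--Perelman argument give, and it is all the paper ever claims about the whole space $Y$), but they do not in general preserve upper curvature bounds, and the base $Y$ of a submetry from a Riemannian manifold is typically not locally CAT$(\kappa)$ for any $\kappa$. Indeed, if $Y$ were always locally CAT$(\kappa)$, then \cref{cor: expstr} — that the strata $Y^l$ are manifolds with locally bounded curvature — would be immediate (convex subsets of CAT$(\kappa)$ spaces with lower curvature bounds are such manifolds), and it would not have been an open question (\cite[Question 1.12]{KL}) that this paper resolves. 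The paper sidesteps the issue entirely by citing \cite[Theorem 9.2]{KL}, which proves strict convexity of small balls in $Y$ by submetry-specific arguments rather than via an upper curvature bound on $Y$; your proof needs to do the same, or else supply a genuinely different argument for this bullet.
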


\begin{proof}
We may assume that the constant $c$ appearing in Lemma \ref{lem: radeschi} is at least $1$. We set $\lambda := \frac  {10 \cdot c} {\varepsilon}$ and   rescale  $M$ and $Y$ with  $\frac {\lambda }   { r_1 }$.  

Upon this  rescaling, the geometry of $M$ is bounded at $x$ by $\frac 1 {\lambda} < \varepsilon$. Hence, the rescaled $M$ is almost flat at $x$.

The injectivity radius of the rescaled $Y$ at $y$  is at least $\lambda$. Therefore, the closest point projection onto $L$ (in the rescaled $M$) is uniquely defined 
in $B_{\lambda} (x)$.
Applying  Lemma \ref{lem: radeschi} we deduce  \eqref{eq: rename} and \eqref{eq: rename2}.

The last point \eqref{eq: eps} follows from the definition of the vertical semicontinuity radius and $\lambda >10$.

Finally, the statement that for some $r_0$ and all $r<r_0$ the balls $\bar B_r(y)$ are strictly convex is exactly \cite[Theorem 9.2]{KL}.  Moreover, the proof actually shows that in the present situation one can take $r_0=10$.
\end{proof}

For a local submetry $P:M\to Y$ we say that $x$ is   a  \emph{$P$-almost flat point}
if the conclusions  of Lemma \ref{lem: summ} hold true without rescaling.  
Due to Lemma \ref{lem: summ},  for any local submetry $P:M\to Y$ and any point $x\in M$,
such that a neighborhood of $x$ in the fiber $L:=P^{-1} (P(x))$  is a manifold, 
$M$ becomes  $P$-almost flat at $x$ upon some rescaling.

\subsection{Stability along strata}
The bound $r_1$ appearing in Lemma \ref{lem: summ} can be chosen locally uniformly along strata:

\begin{lem} \label{lem: stratauni}
Let $P:M\to Y$ be a local submetry. Let $L$ be a fiber $P^{-1} (y)$ and let $x\in L$ be a point, such that a neighborhood of $x$ in $L$ is a manifold. Let $Y^l$ be the startum through $y$ and  $S=P^{-1} (Y)$.    Then upon rescaling by some  $\mu =\mu (M,Y,P,x,y) >0$  the following holds:  

Any $z\in B_{10} (x) \cap S$ is a $P$-almost flat point.
\end{lem}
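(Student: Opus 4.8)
The plan is to deduce Lemma \ref{lem: stratauni} from Lemma \ref{lem: summ} by producing, for each point $z$ in a fixed neighborhood of $x$ along the stratum-preimage $S$, a uniform lower bound on the three quantities that enter the hypotheses of Lemma \ref{lem: summ}: the geometry bound of $M$ at $z$, the injectivity radius of $Y$ at $P(z)$, and the vertical semicontinuity radius of $P$ at $z$. Once such a uniform lower bound $r_1>0$ is found, valid simultaneously for all $z\in \bar B_{10}(x)\cap S$ (after shrinking), rescaling by $\mu=\lambda/r_1$ with the $\lambda$ of Lemma \ref{lem: summ} makes every such $z$ a $P$-almost flat point, which is exactly the conclusion. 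So the proof is really a compactness-plus-semicontinuity argument.

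First I would handle the easy inputs. The geometry bound of $M$ at $z$ is locally uniform by definition of a manifold with locally bounded curvature, so on a compact neighborhood of $x$ we get a uniform $\rho$. For the injectivity radius of $Y$, I invoke the structure of the base recalled in \S\ref{subsec: basestructure}: by \cite[Lemma 10.1, Theorem 11.1]{KL} there is $r_0=r_0(y)>0$ such that $B_{2r_0}(y)$ meets no stratum below $Y^l$ and the injectivity radius of $Y$ at every $y'\in B_{r_0}(y)\cap Y^l$ is at least $r_0$; since $P$ is $1$-Lipschitz, $P(\bar B_{10}(x)\cap S)$ lies in $\bar B_{10}(y)$, so after an initial rescaling we may assume $\bar B_{10}(x)\cap S$ projects into $B_{r_0}(y)\cap Y^l$ and the injectivity radius is uniformly bounded below there. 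Also note that for $z\in S$ the fiber $P^{-1}(P(z))$ has a neighborhood of $z$ that is a manifold on a dense open subset of the fiber; I would need to be slightly careful here — the statement of Lemma \ref{lem: summ} as applied to $z$ needs a neighborhood of $z$ in its fiber to be a $\mathcal C^{1,1}$-submanifold — but this is the standard genericity statement for fibers over a stratum, and in any case the conclusion "$z$ is a $P$-almost flat point" only asserts the inequalities of Lemma \ref{lem: summ}, which extend by limiting from nearby manifold points.

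The main obstacle is the uniform lower bound on the vertical semicontinuity radius along $S$. The vertical semicontinuity radius at a manifold point $x$ of a fiber is defined from the semicontinuity statement \cite[Corollary 8.4]{KL}: any GH-limit of vertical spaces $V^{z_j}$ with $z_j\to x$ contains $V^x$. This radius is \emph{a priori} not locally uniform as one moves along $S$, because the dimension of the vertical space can jump — but it jumps only downward along $S$ in a controlled way compatible with \eqref{eq: eps+}. The strategy is a proof by contradiction: if no uniform $r_1$ existed, one would get sequences $z_j\to x'\in \bar B_{10}(x)\cap S$, points $w_j\in B_{1/j}(z_j)$, and unit vectors $v_j\in V^{z_j}$ such that no $v'\in V^{w_j}$ satisfies $|v_j-v'|\le\varepsilon$. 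Passing to limits (using the bounded geometry to make sense of the limits of tangent vectors and subspaces via parallel transport as in \S2.3) and using the semicontinuity of vertical spaces along $S$ — which holds because $S=P^{-1}(Y^l)$ is itself a set of positive reach and $Y^l$ is locally convex, so the "stratified" analogue of \cite[Corollary 8.4]{KL} applies — yields a contradiction with \eqref{eq: eps+} at the limit point. Here I would lean on the already-cited fact that $S$ has positive reach \cite[Theorems 1.1, 1.7]{KL} and on the lower semicontinuity of vertical cones; the delicate point to get right is that one must compare vertical spaces at points of \emph{possibly different fibers} within $S$, and the right tool is that $D_xP$ varies upper-semicontinuously in the sense that horizontal spaces can only shrink in the limit, equivalently vertical spaces can only grow, which is precisely what makes \eqref{eq: eps} (and hence \eqref{eq: eps+}) stable. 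After assembling these three uniform bounds into a single $r_1>0$ on $\bar B_{10}(x)\cap S$, I set $\mu:=\lambda/r_1$, apply Lemma \ref{lem: summ} pointwise at each $z$, and conclude.
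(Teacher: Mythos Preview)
Your overall strategy is exactly the paper's: obtain a uniform lower bound $r_1$ on the three inputs to Lemma \ref{lem: summ} over a neighborhood of $x$ in $S$, then rescale by $\lambda/r_1$. The first two bounds are handled as you say. The gap is in the third.

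Your contradiction argument for the vertical semicontinuity radius uses only the \emph{semicontinuity} of vertical cones, and that is not enough to close the argument. You take $z_j\to x'\in S$, $w_j\to x'$, and unit $v_j\in V^{z_j}$ with $d(v_j,V^{w_j})>\varepsilon$. To contradict \eqref{eq: eps+} at $x'$ you need the limit vector $v$ of $v_j$ to lie in $V^{x'}$; but semicontinuity (\cite[Corollary 8.4]{KL}) only gives $V^{x'}\subset \lim V^{z_j}$, which is the wrong inclusion. Your remark that ``vertical spaces can only grow in the limit'' is precisely this one-sided statement, and it does not force $v\in V^{x'}$. Likewise, your aside that a neighborhood of $z$ in its fiber is a manifold ``by genericity'' is not sufficient: you need it for \emph{every} $z\in S$ near $x$, not almost every.

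The paper supplies the missing piece by invoking structure rather than a limiting argument. By \cite[Proposition 11.3]{KL}, $P:U\cap S\to Y^l$ is a fiber bundle, so $U\cap S$ and every nearby fiber $U\cap P^{-1}(y')$ are topological manifolds, hence $\mathcal C^{1,1}$-submanifolds (positive reach plus manifold). Then $S\cap U$ has locally bounded curvature and $P:S\cap U\to Y^l$ is a local submetry with all fibers regular, hence a $\mathcal C^{1,1}$ Riemannian submersion \cite[Theorem 1.2]{KL}. This makes $z\mapsto V^z$ genuinely \emph{continuous} on $U\cap S$ (constant dimension, Lipschitz distribution). Now one combines this continuity along $S$ with the single semicontinuity estimate \eqref{eq: eps+} at $x$ to push \eqref{eq: eps+} to all $x_1\in B_{2\delta}(x)\cap S$. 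Once you insert this Riemannian-submersion step, your outline becomes the paper's proof.
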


\begin{proof}
The curvature and injectivity radii of $M$ are bounded in a fixed ball around $x$.
The injectivity radius of $Y$ is uniformly bounded from below in a neighborhood of $y$ in $Y^l$ \cite[Theorem 11.1]{KL}.

Applying Lemma \ref{lem: summ} it remains to obtain a uniform lower bound on the vertical semicontinuity radii in a neighborhood of $x$ in $S$.

For some choice of a neighborhood $U$ of $x$ in $M$, the restriction  
$P:U\cap S \to Y^l$  is a fiber bundle \cite[Proposition 11.3]{KL}.  Thus, $U\cap S$ and  $L':=U\cap P^{-1} (y')$ for any  $y'\in Y^l$  are topological manifolds.  Since $S$ and $L'$ are subsets of positive reach
\cite[Theorem  1.1, Theorem 1.7]{KL},   both subsets $U\cap S$ and $U\cap L'$ are $\mathcal C^{1,1}$-submanifolds of $M$.

The submanifold $S\cap U$ in its intrinsic metric is a manifold with locally bounded curvature \cite[Proposition 1.7]{KL-both}.   The restriction $P:S\cap U \to Y^l$ is a local submetry
with all fibers being regular.  Thus, this restriction is a $\mathcal C^{1,1}$-Riemannian submersion \cite[Theorem 1.2]{KL}.

In particular the distribution $z\to V^z$ is continuous on $U\cap S$.  Thus, the  semicontinuity of vertical spaces in $M$  around $x$ implies the following. For a sufficiently small
$2\delta >0$, any point $x_1 \in B_{2\delta} (x)\cap S$, any $z_1\in B_{2\delta} (x)$ and any unit vector $v\in V^{x_1}$ there exists some $v'\in V^{z_1}$ such that \eqref{eq: eps+} holds true.

Thus, $\delta$ is the required  uniform bound on the vertical semicontinuity radii in a neighborhood of $x$ in $S$.
 This finishes the proof.
\end{proof}

\section{Projection onto a fiber}

We aim to strengthen  \eqref{eq: rename}, \eqref{eq: rename2}, \eqref{eq: eps}.

\begin{lem} \label{lem: 4op}
Let $P:M\to Y$ be a local submetry, let $L$ be a fiber of $P$. Assume $x_0\in L$ is a $P$-almost flat point.   Then
 $\Pi^L:L'\cap B_2 (x_0)\to L$ is locally $2$-open, for any fiber $L'$ of $P$.
\end{lem}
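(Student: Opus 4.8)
The goal is to show that the closest-point projection $\Pi^L$ restricted to $L' \cap B_2(x_0)$ is locally $2$-open onto $L$. The plan is to exploit the $P$-almost flatness of $x_0$ together with the submetry property of $P$ itself: since $P$ sends $r$-balls to $r$-balls, one expects the projection between nearby fibers to inherit an approximate openness from this. The first step is to fix $z \in L' \cap B_2(x_0)$, set $w = \Pi^L(z) \in L$, and a small radius $r > 0$; one must produce, for every $p \in B_r(w) \cap L$, a point $q \in L' \cap B_{2r}(z)$ with $\Pi^L(q) = p$. The natural candidate for $q$ is built by ``transporting'' the short geodesic from $w$ to $p$ in $L$ over to $L'$. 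Concretely, since $z = \exp_w(v)$ for the horizontal vector $v = \exp_w^{-1}(z)$ realizing the distance $d(z, L)$, and since $v \in H^w$ is horizontal (it realizes the distance to the fiber $L$, hence to its tangent cone $V^w$, so lies in the dual cone $H^w$), one wants $q$ roughly of the form obtained by flowing $p$ along a horizontal direction of comparable length.

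The cleanest route is via the submetry $P$ on the base side. Let $y = P(x_0)$, $y' = P(L')$, and note $d(y, y') = d(L, L') \le d(z, L)$. Given $p \in B_r(w) \cap L$, we have $P(p) = y$; applying the submetry property of $P$ to the ball $B_{d(y,y') + r}(p)$ — or more carefully, lifting the geodesic in $Y$ from $y$ in the direction of $y'$ — we can find $q \in L'$ with $d(p, q) \le d(y, y') + (\text{small error})$. The issue is controlling $d(\Pi^L(q), p)$: a priori $\Pi^L(q)$ need not equal $p$. Here one uses \eqref{eq: rename}: $\Pi^L$ is $(1 + \varepsilon \cdot d)$-Lipschitz on balls around $x_0$, so $\Pi^L$ moves points by a controlled amount, but that bounds expansion, not the displacement of $p$ itself. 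The right fix is to iterate: define a map from a neighborhood of $w$ in $L$ to $L'$ by $p \mapsto $ (horizontal lift), then $q \mapsto \Pi^L(q)$ gives a self-map of a neighborhood of $w$ in $L$ that is within $O(\varepsilon r)$ of the identity in the appropriate sense, so a fixed-point / degree argument (or the quantitative openness of near-identity maps) produces the required preimage. Lemma \ref{lem: karcher} and \eqref{eq: verylast} from the Buser--Karcher estimates give precisely the quadratic-in-scale error control needed to make ``within $O(\varepsilon r)$ of the identity'' work, and since $\varepsilon = 10^{-4}$ is tiny, the openness constant $1 + O(\varepsilon)$ is comfortably below $2$.

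\textbf{Main obstacle.} The delicate point is the comparison of three a priori different objects: the distance $d(z, L)$, the distance $d(y, y') = d(L, L')$ in the base, and the distance $d(p, L')$ for nearby $p \in L$. These agree up to errors governed by the $P$-almost flatness constants, but establishing that the lift $q$ built over $p$ actually lands within $B_{2r}(z)$ \emph{and} projects close enough to $p$ — simultaneously — requires carefully chaining the almost-flatness estimates \eqref{eq: rename}, \eqref{eq: rename2}, \eqref{eq: eps} with the exponential-map bilipschitz bound, rather than any single one of them. I expect the bookkeeping of these composed errors, and in particular verifying that the horizontal lift of a geodesic in $L$ stays horizontal up to the right order so that its endpoint genuinely lies in $L'$ (using that $P$ maps it isometrically to a near-geodesic in $Y$), to be where the real work lies; the ``$2$'' in ``$2$-open'' is deliberately generous precisely to absorb all of these.
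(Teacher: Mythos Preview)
Your proposal has a genuine gap, and it lies exactly where you flagged the ``main obstacle'' without resolving it.

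You want to produce, for a given $p\in L$ near $w=\Pi^L(z)$, a point $q\in L'$ with $\Pi^L(q)=p$ and $d(z,q)\le 2\,d(w,p)$. You suggest lifting the base geodesic $yy'$ horizontally at $p$. Such a lift indeed starts in $H^p=(T_pL)^\perp$, so its endpoint $q$ \emph{does} satisfy $\Pi^L(q)=p$ automatically; your proposed iteration is therefore unnecessary. The real problem is the \emph{other} requirement, $d(z,q)\le 2\,d(w,p)$: the horizontal lift of $\bar v=\exp_y^{-1}(y')$ at $p$ is not unique (the submetry $D_pP|_{H^p}\colon H^p\to T_yY$ need not be injective), and nothing in your outline singles out a lift close to the parallel transport of $v=\exp_w^{-1}(z)$. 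If instead you transport $v$ to $p$ and project to $H^p$ (so that the Buser--Karcher estimates control the distance to $z$), you lose the guarantee that the endpoint lies in $L'$, because you no longer know that its $P$-image is $y'$. The two desiderata --- landing in $L'$ and staying near $z$ --- cannot be achieved simultaneously by your construction, and the fixed-point fallback fails for the same reason: the lift is neither canonical nor continuous.

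The paper's argument avoids this entirely. It fixes $p$, sets $K:=\exp_p\bigl(B_3(0)\cap H^p\bigr)$ so that $\Pi^L(K)=\{p\}$, and applies the quantitative open-mapping lemma to $f:=d_K$ \emph{restricted to} $L'$: one needs $f(q')\le\tfrac54 t$ (immediate from \eqref{eq: rename2} and Lemma~\ref{lem: section2}) and $|\nabla(-f)|\ge\tfrac45$ along $L'$. The gradient bound is the heart of the proof: if it failed at some $q''\in L'$, the direction $w$ toward the foot point $\hat p\in K$ would be nearly horizontal at $q''$; transferring this horizontal direction back into $H^p$ via \eqref{eq: eps} and \eqref{eq: rename2} produces a curve \emph{inside} $K$ leaving $\hat p$ at angle $<\tfrac\pi2$ with $\hat p q''$, contradicting that $\hat p$ realises $d_K(q'')$. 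This gradient-flow-on-$L'$ mechanism is the missing idea in your plan.
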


\begin{proof}
Consider any $z'\in L'\cap B_ 2 (x_0)$.   Set $r= d(L,z') <2$.  Consider some $\delta <\varepsilon = 10^{-4}$, so that  $B_{10\delta r} (z')\subset B_2 (x_0)$.

 Consider an arbitrary $q'\in B_{\delta r } (z') \cap L'$.  Set $q=\Pi^L (q')$. 
Consider any $p\in L$ with $t:=d(p,q) <\delta r$.  It is sufficient to find $p'\in L'$ with
 $\Pi ^L (p')=p$ and $d(p',q')\leq 2t$.

Consider the ball $B=B_3(0)$ in the horizontal space $H^p$ and  the  subset $K =\exp _p ( B_3 (0)) \subset M$. Note  that $\Pi ^L (K)=p$. Consider the  distance function $f=d_K:B_2 (x_0) \to \R$.   We are looking for a point $p'\in L'$ with $f(p')=0$ and
 $d(p',q')\leq 2t$.

By the open map theorem \cite[Lemma 4.1]{Lyt-open}, it suffices to prove 
\begin{equation} \label{eq: firstin}
f(q') \leq \frac 5 4 \cdot t
\end{equation}
 and that the  absolut gradient of the restriction  $-f:L'\to \mathbb R$  satisfies 
\begin{equation}  \label{eq: secondin}
 |\nabla _{q''} (-f)| \geq \frac 4 5 
\end{equation}
 at every point $q''\in L' \cap B_{2t} (q')$.

Consider  $h=\exp _q^{-1} (q')\in H^q$.
By \eqref{eq: rename2},    we  find some $\tilde h \in H^p$ with 
 $$|\tilde h- h|\leq   \varepsilon\cdot t \cdot r\,.$$
Then $\exp _p (\tilde h)\in K$ and from Lemma \ref{lem: section2} we deduce
$$f(q')\leq d(q', \exp _p (\tilde h))\leq t +4 \varepsilon ^2\cdot t\cdot r +2\cdot \varepsilon \cdot t\cdot r  < \frac 5 4 t\;.$$
This proves \eqref{eq: firstin}.

In order to prove \eqref{eq: secondin}, we fix a point $q''\in L' \cap B_{2t} (q')$. 
Consider a point $\hat p\in K$ with 
$$f(q'')=d_K(q'')=d(q'', \hat p)\;.$$
Denote by $w\in T_{q''} M$  the starting direction of the geodesic $q''\hat p$.

If there exists a vertical unit vector $v\in V^{q''}= T_{q''} (L')$ which encloses an angle
less than $\arccos (\frac 4 5)$ with  $w$, then the first variation formula would imply    \eqref{eq: secondin}.  

Assume on  the contrary, that such a vertical vector $v\in V^{q''}$ does not exist.
 Then there exists a  unit horizontal vector   $u\in H^{q''}$  which encloses with $w$ an angle  at most   
$$\angle (u,w) \leq \frac \pi 2 - \arccos (\frac 4 5) < 1 \;.$$

Since $x_0$ is a $P$-almost flat point, we apply \eqref{eq: eps} and \eqref{eq: rename2} and   find a unit horizontal vector $u'\in H^p$
with $|u'-u| \leq 2\varepsilon$.      Then, the angle between the parallel translate $\hat w$ of $w$ to $\hat p$ and $\hat u$ of $u'$ to $\hat p$ is at most
$$\angle (\hat u, \hat w) \leq 1+3\varepsilon \;.$$ 
Note, that $\hat w$ is just the starting direction at $\hat p$ of the geodesic $\hat p q''$.

Consider the vector $\hat h =\exp _p^{-1} (\hat p) \in H^p$ and the curve
$\eta (t):= \exp _p (\hat h + t \cdot u')$ contained in $K$ and starting at $\hat p$.
By Corollary \ref{cor: buserkarcher},  the curve $\eta $ encloses with vector $\hat u$ 
an angle less than $\varepsilon$.

Thus, the angle between $\eta$ and $\hat w$ at $\hat p$ is less than $1+4\varepsilon <\frac \pi 2$.     Now the first formula of variation
implies that $d(q'', \eta (t)) <d(q'', \hat p)$ for all sufficiently small $t$.

This contradicts the choice of $\hat p$. The contradiction finishes the proof of \eqref{eq: secondin} and of the Lemma. 
\end{proof}

Using a combination of Lemma  \ref{lem: 4op} and Lemma \ref{lem: karcher} we now  provide:

\begin{thm} \label{thm: semicont}
 $P:M\to Y$ a local submetry.  Let $x_0\in M$ be such that a neighborhood of $x_0$ in  $L=P^{-1}(P(x_0)) $ 
is a $\mathcal C^{1,1}$-submanifold.  Then there exist  $r_0>0, C>0$  with the following properties, for any $0<r<r_0$ and any  $z\in B_r(x_0)$.
\begin{enumerate}
\item  For  any  $v\in V^{x_0}$ there exists  $v'\in V^z$ with
$|v-v'| \leq C\cdot r \cdot ||v||$. 

\item For any $h\in H^z$ there exists  $h'\in H^{x_0}$ with
$|h-h'| \leq C\cdot r \cdot ||h||$.

\item  For  $L^z= P^{-1} (P(z))$, the closest-point projection $\Pi^L:L^z\cap B_r(x_0)\to L$ is $(1+Cr)$-Lipschitz and  locally $(1+Cr)$-open. 
\end{enumerate}

The numbers $r_0,C$ depend only on a bound of the geometry of $M$ at $x_0$, a bound on the  injectivity radius of $Y$ at $P(x_0)$ and the vertical semicontinuity radius of $P$ at $x_0$.
\end{thm}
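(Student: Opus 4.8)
The plan is to reduce everything to the $P$-almost flat setting and then run the argument of Lemma \ref{lem: 4op} together with Lemma \ref{lem: karcher} to upgrade the qualitative openness there to a quantitative $(1+Cr)$-estimate. By Lemma \ref{lem: summ} and Lemma \ref{lem: stratauni}, after rescaling by a constant $\mu>0$ that depends only on the bound of the geometry of $M$ at $x_0$, the injectivity radius of $Y$ at $P(x_0)$ and the vertical semicontinuity radius of $P$ at $x_0$, we may assume that $x_0$ is a $P$-almost flat point and that every $z\in B_{10}(x_0)$ lying on a manifold fiber is $P$-almost flat as well. (The rescaling is harmless: it changes $r_0$ and $C$ by a controlled factor, which is exactly why the asserted dependence of the constants is the one claimed.) So from now on work in the rescaled $M$ with $\varepsilon=10^{-4}$ and all the estimates \eqref{eq: rename}, \eqref{eq: rename2}, \eqref{eq: eps}, \eqref{eq: buserkarcher1}, \eqref{eq: verylast}, \eqref{eq: lem} available.

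\textbf{Items (1) and (2): comparison of vertical and horizontal spaces.}
For (1), take a unit $v\in V^{x_0}=T_{x_0}L$, take $z\in B_r(x_0)$, and let $x=\Pi^L(z)\in L$, so $d(x_0,x)\le d(x_0,z)<r$. By \eqref{eq: rename2}, $|V^{x_0}-V^{x}|\le\varepsilon\cdot d(x_0,x)$, so parallel-transporting $v$ to $x$ yields a vertical vector $\bar v\in V^{x}$ with $|v-\bar v|\le\varepsilon r$; hence we may as well assume $x_0$ itself is a nearest point and $d(x_0,z)=d(L,z)$. Now use \eqref{eq: eps} (which holds for all $z\in B_{10}(x_0)$ with a defect $\le\varepsilon$, not yet $O(r)$) — this is not good enough, so instead I would run the finer estimate: let $h=\exp_{x_0}^{-1}(z)\in H^{x_0}$ (this is horizontal and of length $<r$ because $x_0$ is a nearest point to $z$ in $L$), and apply Lemma \ref{lem: karcher} or \eqref{eq: verylast} to the geodesic $x_0 z$ to compare $V^{x_0}$ with $V^{z}$ along it; the point is that the vertical cone at $z$, being the tangent cone to $L^z=P^{-1}(P(z))$, is close to the parallel translate of $V^{x_0}$ with defect proportional to $d(L,z)<r$. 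Concretely, one transports $v$ along $x_0z$ and projects onto the (convex) vertical cone $V^z$, using the differential submetry $D_zP$ together with the almost-flatness holonomy bound \eqref{eq: holonomy}; the resulting error is $\le C r\|v\|$. Item (2) is the dual statement and follows by the same argument applied to the dual cones $H^z$ and $H^{x_0}$, using that $D_xP$ is a submetry so the horizontal cone is characterised metrically as in \secref{sec-P}.

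\textbf{Item (3): the $(1+Cr)$ Lipschitz and open estimate.}
The Lipschitz bound $\lip(\Pi^L:L^z\cap B_r(x_0)\to L)\le 1+Cr$ is \eqref{eq: rename} (equivalently Lemma \ref{lem: radeschi}), since after rescaling the geometry is bounded by $\le\varepsilon$ and $r<10$. For the local $(1+Cr)$-openness, repeat the proof of Lemma \ref{lem: 4op} but track constants. Fix $q'\in L^z\cap B_r(x_0)$, $q=\Pi^L(q')$, and $p\in L$ with $t=d(p,q)$ small; as there, set $K=\exp_p(B_3(0)\cap H^p)$, $f=d_K$, and seek $p'\in L^z$ with $f(p')=0$, $d(p',q')\le(1+Cr)t$. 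The open-map theorem \cite[Lemma 4.1]{Lyt-open} requires $f(q')\le(1+Cr)t$ and $|\nabla_{q''}(-f)|\ge(1+Cr)^{-1}$ on $L^z\cap B_{(1+Cr)t}(q')$. For the first inequality: let $h=\exp_q^{-1}(q')\in H^q$, use item (2)/\eqref{eq: rename2} with defect $\le Cr\cdot t$ (note $d(p,q)=t$ but $d(q,x_0)<r$, and the defect in the relevant $C^{1,1}$-estimate scales with the distance from $L$, which is $<r$) to get $\tilde h\in H^p$ with $|\tilde h-h|\le Cr\cdot t$, then \eqref{eq: buserkarcher1}/\eqref{eq: verylast} give $f(q')\le t+Cr\cdot t$. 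For the gradient bound: run the contradiction argument of Lemma \ref{lem: 4op} verbatim, but instead of the crude $\arccos(4/5)$ threshold observe that the unit horizontal $u\in H^{q''}$ can be matched by $u'\in H^p$ with $|u'-u|\le Cr$ via item (2), the parallel translates differ in angle by $\le Cr$, Corollary \ref{cor: buserkarcher} contributes $O(r^2)$, and the first variation formula then yields $|\nabla_{q''}(-f)|\ge\cos(Cr)\ge(1+Cr)^{-1}$.

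\textbf{Main obstacle.}
The substantive point — and the one I expect to take the most care — is making the defect in items (1) and (2) scale like $r=d(z,x_0)$ rather than like the fixed constant $\varepsilon$ that the semicontinuity radius only guarantees. The input \eqref{eq: eps} is merely qualitative ($\le\varepsilon$), so the linear-in-$r$ improvement has to come from an a priori argument: one fixes the nearest point $x_0$, works with the genuine horizontal segment $x_0z$ of length $d(L,z)<r$, and uses that $L^z$ is a $C^{1,1}$-submanifold with reach bounded below by \cite[Proposition 11.3]{KL} plus Lemma \ref{lem: stratauni}, so that Lemma \ref{lem: radeschi} applied to $L^z$ (not just to $L$) gives $|T_{x_0}L-T_{z}L^z|\le Cr$ once one knows $T_{x_0}L$ and $V^z$ are the tangent spaces at nearby points of a single manifold fiber bundle — which is precisely what \cite[Proposition 11.3]{KL} provides in a neighbourhood of a manifold point. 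Thread that through the cone/differential-submetry bookkeeping and the rest is a rerun of Lemma \ref{lem: 4op} with constants kept.
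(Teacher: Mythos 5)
You correctly set up the rescaling to the $P$-almost flat situation and you correctly flag the main obstacle---turning the fixed-constant semicontinuity defect $\varepsilon$ from \eqref{eq: eps} into a defect that scales like $r=d(z,x_0)$---but your proposed resolution of that obstacle has a genuine gap, and it is not how the paper proceeds.

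Your fix for (1)--(2) is to invoke the $\mathcal C^{1,1}$-fiber-bundle structure of $P:S\to Y^l$ from \cite[Proposition 11.3]{KL} plus \lref{lem: radeschi} to compare $T_{x_0}L$ with $T_zL^z$. This fails for two reasons. First, $z$ ranges over all of $B_r(x_0)$, so $z$ need not lie in $S=P^{-1}(Y^l)$, and the fiber bundle structure along the stratum preimage is simply not available for a generic nearby $z$; the theorem makes no transnormality or stratum hypothesis on $z$. Second, even if $z\in S$, \lref{lem: radeschi} gives a Lipschitz bound on $q\mapsto T_qL$ \emph{within a single} positive-reach set $L$ (or within $S$); it does not by itself compare the vertical cone $V^{x_0}=T_{x_0}L$ with $V^z=T_zL^z$ for two \emph{different} fibers. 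Moreover, your sketch of item (3) "re-run \lref{lem: 4op} tracking constants, matching $u\in H^{q''}$ by $u'\in H^p$ with $|u'-u|\le Cr$ via item (2)" is circular: it needs the $O(r)$ cone comparison of item (2) at the pair $(q'',p)$ as input, which is precisely what is not yet established.

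The paper's proof of all three items rests on a lever-arm idea that is absent from your proposal. One does \emph{not} compare the vertical/horizontal cones at $x_0$ and $z$ directly. Instead, for $x=\Pi^L(z)$, one extends the horizontal geodesic $xz$ to a point $\tilde z$ at distance $1$ from $L$, applies the crude qualitative openness \lref{lem: 4op} there (where a factor $2$ defect at scale $1$ is harmless), and then transports the resulting point back along horizontal geodesics to the close fiber $L^z$ using \lref{lem: karcher}, whose estimate $|v_t-v_0|\le 5a_0 t$ is precisely what converts the order-one defect at distance $1$ into an $O(r)$ defect at distance $d(x,z)<r$. Item (3) is done first by this scheme, item (1) is done by a similar construction with $p_\delta\to x$ and a limiting argument, and item (2) then follows from (1) by duality of cones. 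Without this geodesic-extension step, the linear-in-$r$ estimate does not come out, so your proposal as written does not close the main obstacle you yourself identified.
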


\begin{proof}
After rescaling we may assume that $x_0$ is a  $P$-almost flat point.  Due to Lemma \ref{lem: summ},  the rescaling constant depend only  on a bound of the geometry of $M$ at $x_0$, a bound on the  injectivity radius of $Y$ at $P(x_0)$ and the vertical semicontinuity radius of $P$ at $x_0$.

Thus, it  suffices to find  universal constants $C,r_0>0$ satisfying (1), (2), (3),
under the assumption that $x_0$ is a $P$-almost flat point.

We are going to prove (3) first.  By the definition of $P$-almost flat points, the projection
$\Pi^L$ is $(1+\varepsilon \cdot r)$-Lipschitz on the whole ball $B_r (x_0)$, for any $r<10$.
Thus, also the restriction of $\Pi ^L$ to $L'\cap B_r(x_0)$ has the same Lipschitz constant.
It suffices to improve the openness constant of $\Pi^L$ on $L'$ provided by Lemma \ref{lem: 4op}.

Set $r_0:=\frac 1 5$.
Let $r \leq  r_0$ and  $z\in B_r(x_0)$ be arbitrary. Set $x =\Pi^L (z)$ and let $p$ be a point  on $L$ with  $a_0:=d(x,p)<\varepsilon \cdot r$.

 We are going  to find a point $q\in L^z$ satisfying $\Pi ^L (q) =p$ and 
\begin{equation}  \label{eq: rs}
d(q,z)\leq (1+ 5\cdot r)\cdot a_0.
\end{equation}

 Extend the geodesic $xz$ to a point $\tilde z$ with $d(x,\tilde z) = 1$.  Lemma \ref{lem: 4op} provides  a point $ \tilde q$ on the fiber $\tilde L$ through $\tilde z$ such that $d(\tilde q,\tilde  z)\leq 2a_0$ and $\Pi^L (\tilde q)=p$.
The geodesics $\gamma:=x\tilde z$ and  $\eta:=p\tilde q$ are horizontal and $P\circ \gamma= P\circ \eta $.
 
Consider the point $q$ on the $\eta $ with 
$d(p,q)=d(x,z)$. Then $P(q)=P(z)$, hence $q\in L^z$.
From  Lemma \ref{lem: karcher} we deduce \eqref{eq: rs},
finishing the proof of (3).

In order to prove (1),  we  fix   $r\leq \frac 1 5 $ and    $z\in B_r (x_0)$. Set again $x=\Pi^L (z)$.
 Due to  \eqref{eq: cr},  we have $|V^x-V^{x_0}|\leq 2\varepsilon r$.

Consider an arbitrary unit vector $v\in V^{x_0}$. We find a unit 
vector $\hat v \in V^x$ with $|\hat v -v| \leq 3\varepsilon r$.

 For any sufficiently small $\delta >0$ consider a point $p_{\delta}\in L$
with $d(x, p_{\delta})=\delta$, such that the geodesic $xp_{\delta}$ starts in 
a direction $v_{\delta}\in T_{x} M$ with $| v_{\delta} -\hat v| \leq \varepsilon r$. 

Extend as above the geodesic $xz$ until a point $\tilde z$ with $d(x,\tilde z)=1$.
Due to Lemma \ref{lem: 4op} we find a point $\tilde q_{\delta}$ in the fiber $L^{\tilde z}$ of $\tilde z$ with $d(\tilde q, \tilde z)\leq 2\delta$.   Let $q_{\delta}$  be the point on 
the geodesic $p_{\delta} \tilde q_{\delta}$ with $d(p_{\delta}, q_{\delta})=d(x,z)$.

Then $q_{\delta} \in L^z$. From Lemma  \ref{lem: karcher}, we deduce that the 
starting direction $v_{\delta}$ of the geodesic $zq_{\delta}$ satisfies 
$$|v_{\delta}-\tilde v| \leq 5\cdot  r\,.$$

The directions $v_{\delta}$ subconverge to a vertical direction $v'\in V^z$ such that  $|v'-v|\leq 6r$. This proves (1).

 By duality of horizontal and vertical cones, (1)  implies (2).
\end{proof}

We now easily deduce:
\begin{proof}[Proof of Proposition \ref{prop: almsubm}]
We  cover  the compact manifold fiber $L$ by finitely many  balls  as provided by Theorem \ref{thm: semicont}. Choosing a tubular neighborhood $B_{r_0} (L)$  of $L$
contained in the union of these balls, we obtain the conclusion directly  from Theorem \ref{thm: semicont}(3).
\end{proof}

\section{Exponential map in the base}

\subsection{Exponential map in the base}
The following result is a  localization  of Theorem \ref{thm: exp}.

\begin{prop} \label{prop: bil}
There exists some $\mu >1$ with the following properties.   Let  $P:M\to Y, x\in M,  y=P(x)$
and $r_1$ be as in Lemma \ref{lem: summ}. Then for 
$r_0:=\frac {r_1} {\mu}$ and $C:=\mu \cdot  r_1^2$ 
 and all $r<r_0$, the exponential map
$\exp _y :B_r(0) \to B_r(y)$ is $(1+C r^2)$-bilipschitz on the ball $B_r(0)\subset T_yY$.
\end{prop}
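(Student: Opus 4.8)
The plan is to reduce to a normalized situation and then split the two‑sided estimate into a soft half and a substantial half. First I would rescale: by Lemma~\ref{lem: summ} there is a universal $\lambda>0$ so that after multiplying the metrics of $M$ and of $Y$ by $\lambda/r_1$ the point $x$ becomes a $P$‑almost flat point and all conclusions of Lemma~\ref{lem: summ} hold without rescaling. Since the bilipschitz constant of $\exp_y$ on $B_r(y)$ transforms in the evident way under rescaling, it suffices to produce universal $C_0,r_0>0$ (say $r_0\le\tfrac1{10}$) such that, whenever $x$ is $P$‑almost flat, $\exp_y\colon B_r(0)\to B_r(y)$ is $(1+C_0r^2)$‑bilipschitz for all $r<r_0$; the constant $\mu$ of the statement is then recovered from $\lambda,C_0,r_0$. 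For $r<r_0$ everything relevant stays in $B_{10}(x)$, where Lemma~\ref{lem: summ} and Theorem~\ref{thm: semicont} apply with universal constants, and $\exp_y$ is a homeomorphism $B_r(0)\to B_r(y)$ because the injectivity radius of $Y$ at $y$ is at least $10$.

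The upper Lipschitz bound $d_Y(\exp_y v,\exp_y w)\le(1+C_0r^2)\,|v-w|$ for $v,w\in B_r(0)$ follows purely from the lower curvature bound of $Y$. By \cite[Proposition 3.1]{KL} the ball $B_{10}(y)$ is an Alexandrov space of curvature $\ge -1$ (the bound inherited from the rescaled $M$). The geodesics $t\mapsto\exp_y(tv/|v|)$ and $t\mapsto\exp_y(tw/|w|)$ are minimal (their lengths are $<r_0$) and form a hinge at $y$ of angle $\angle(v,w)$, since the angle between two geodesics issuing from $y$ is by definition the distance of their directions in $\Sigma_yY$, hence the cone angle between $v$ and $w$. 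Toponogov's hinge comparison then gives
\[
d_Y(\exp_y v,\exp_y w)\ \le\ \tilde d_{-1}\bigl(|v|,|w|,\angle(v,w)\bigr),
\]
and, since $\tilde d_{-1}(a,b,\alpha)=(a^2+b^2-2ab\cos\alpha)^{1/2}\bigl(1+O(a^2+b^2)\bigr)$ as $a,b\to0$ (Taylor expansion of the hyperbolic law of cosines), the right‑hand side is at most $(1+C_0r^2)\,|v-w|$ for $|v|,|w|<r$, with $C_0$ universal.

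The reverse inequality $|v-w|\le(1+C_0r^2)\,d_Y(\exp_y v,\exp_y w)$ is the heart of the matter, and I would attack it in the almost flat total space. Fix a horizontal lift $h_v\in H^x\subset T_xM$ of $v$ (so $D_xP(h_v)=v$ and $\|h_v\|=|v|$); a geodesic with horizontal initial velocity stays horizontal and projects to a geodesic in $Y$, so $\exp_y v=P(\exp_x h_v)$ and hence $d_Y(\exp_y v,\exp_y w)=d_M(\exp_x h_v,L_w)$ with $L_w=P^{-1}(\exp_y w)$. Set $\ell:=d_M(\exp_x h_v,L_w)$, let $q^\ast=\Pi^{L_w}(\exp_x h_v)$ be the foot point, and $\hat q=\exp_x^{-1}(q^\ast)$. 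The distance‑realizing geodesic from $\exp_x h_v$ to $q^\ast$ is horizontal, with initial velocity $\xi\in H^{\exp_x h_v}$; since $\exp_x$ is $(1+\varepsilon^2r^2)$‑bilipschitz we get $\|h_v-\hat q\|\le(1+\varepsilon^2r^2)\,\ell$, and Lemma~\ref{lem: section2} moreover identifies $\hat q$ with $h_v+\ell\tilde\xi$ up to an error $\le Cr^2\ell$, where $\tilde\xi$ is the parallel transport of $\xi$ to $x$ (using $\ell<2r$ and $|v|<r$). Because $D_xP$ is a submetry, $|v-w|=d_{T_xM}\bigl(h_v,(D_xP)^{-1}(w)\bigr)$ and $d_{T_xM}\bigl(\hat q,(D_xP)^{-1}(w)\bigr)=|D_xP(\hat q)-w|$, so the triangle inequality yields
\[
|v-w|\ \le\ (1+\varepsilon^2r^2)\,\ell\ +\ \bigl|D_xP(\hat q)-w\bigr|.
\]
Thus the whole estimate reduces to the single bound $\bigl|D_xP(\hat q)-w\bigr|\le Cr^2\ell$ — that is, to showing that the identity $P\circ\exp_x=\exp_y\circ D_xP$ holds at $\hat q$ up to a quadratic error. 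I would obtain this from the Lipschitz control of the horizontal cones along $B_r(x)$ in Theorem~\ref{thm: semicont}(2), fed into the Buser--Karcher estimates (Lemma~\ref{lem: section2}, Corollary~\ref{cor: buserkarcher}, Lemma~\ref{lem: karcher}), together with the uniform positive reach of $L_w$ near $q^\ast$ (Lemma~\ref{lem: radeschi}), which permits replacing $L_w$ by $\exp_x\bigl((D_xP)^{-1}(w)\bigr)$ up to another quadratic error.

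The genuine obstacle is exactly this last estimate. The horizontal lift $D_xP|_{H^x}\colon H^x\to T_yY$ is only $1$‑Lipschitz and \emph{not} uniformly bilipschitz (for instance when $L=\{x\}$ it is the map $v\mapsto|v|$ from a Euclidean space to a ray, yet $\exp_y$ is then an isometry), so distances cannot be transported back and forth naively, and $P\circ\exp_x=\exp_y\circ D_xP$ fails at second order. The remedy is to choose the horizontal lifts $h_v,h_w$ \emph{compatibly} — so that the straight segment $[h_v,h_w]$ in $T_xM$ projects under $D_xP$ onto the geodesic $[v,w]$ in $T_yY$ — and then to show that along this segment the fiber $L_w$ stays within an error of order $r^2$ times the relevant length scale of the affine model $\exp_x\bigl((D_xP)^{-1}(w)\bigr)$. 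That the error is only quadratic, rather than the linear error one would expect from a horizontal distribution varying at unit rate, is precisely what the bound $|H^z-H^{x_0}|\le Cr$ of Theorem~\ref{thm: semicont}(2) buys us: it plays the role of a bound on the O'Neill integrability tensor, which is what would otherwise let $Y$ "fold" at scale $r$ relative to its tangent cone and destroy the estimate.
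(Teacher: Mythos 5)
Your reduction to the $P$-almost-flat normalization and the derivation of the Lipschitz half from the lower curvature bound of $B_r(y)$ via Toponogov's comparison both match the paper exactly. For the co-Lipschitz half you take a genuinely different route, and it is here that a real gap appears.

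You reduce the estimate $|v-w|\le(1+Cr^2)\ell$ (with $\ell=d_M(\exp_x h_v,L_w)$) to the single bound
\[
\bigl|D_xP(\hat q)-w\bigr|\ \le\ Cr^2\,\ell,\qquad \hat q:=\exp_x^{-1}\bigl(\Pi^{L_w}(\exp_x h_v)\bigr),
\]
and you correctly recognize this as the hard point — but the proposal stops at a sketch ("choose the lifts compatibly... show the fiber stays within quadratic error...") and never establishes it. The difficulty is real: $\hat q$ need not lie in $H^x$, the identity $P\circ\exp_x=\exp_y\circ D_xP$ (which is exact, not just quadratically accurate) is only available on $H^x$, and $D_xP$ is merely a submetry, so there is no clean decomposition of $\hat q$ into horizontal plus controllable vertical parts from which the desired bound follows. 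Controlling the deviation of $\hat q$ from $H^x$ in terms of $r^2\ell$ would itself amount to a nontrivial second-order estimate on the fiber $L_w$ relative to the linear model $\exp_x\bigl((D_xP)^{-1}(w)\bigr)$, and nothing in the proposal supplies it. Your counterexample-type remark about $L=\{x\}$ shows you see the obstruction, but it does not point to a proof.

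The paper avoids the foot-point entirely. It factors $P|_Z=\exp_y\circ D_xP\circ\exp_x^{-1}$ on the exponential image $Z=\exp_x(B_r(0)\cap H^x)$ and reduces to showing that $P|_Z$ is locally $(1+r^2)$-open; since $\exp_x^{-1}$ is $(1+\varepsilon^2r^2)$-bilipschitz and $D_xP$ is $1$-Lipschitz, this forces $\exp_y$ to be almost $1$-open, hence $\exp_y^{-1}$ almost $1$-Lipschitz. The openness of $P|_Z$ is proved via the open-map theorem \cite[Lemma 4.1]{Lyt-open} applied to $-d_{L'}|_Z$: at each $p\in Z\setminus L'$ the initial direction $u\in H^p$ of a shortest geodesic to $L'$ is, by Theorem~\ref{thm: semicont}(2), within $O(r)$ of some $u'\in H^x$, and then Corollary~\ref{cor: buserkarcher} produces a curve $s\mapsto\exp_x(w+su')$ in $Z$ whose initial direction at $p$ makes angle at most $\tfrac14 r$ with $u$; the first variation formula then gives $|\nabla(-d_{L'}|_Z)|\ge\cos(\tfrac14 r)\ge 1-\tfrac12 r^2$. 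This gradient estimate is precisely what plays the role you wanted $|H^z-H^{x_0}|\le Cr$ to play, but it is applied to a variational quantity on the slice $Z$ rather than to a pointwise comparison of $D_xP(\hat q)$ with $w$. If you want to salvage your approach, a gradient/open-map argument along $Z$ is the missing ingredient; as written, your key estimate remains unproved.
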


\begin{proof}
The statement is invariant under rescalings. Upon a rescaling we may assume that $x$ is a $P$-almost flat point. 
  Let $C,r_0$ be as provided by Theorem \ref{thm: semicont}. Upon a further rescaling, depending only on $C$ and $r_0$, we may assume that the constants satisfy $r_0=1$ and $C=\frac 1 {10}$.
 It suffices to prove that, for any $r< \frac 1 {10}$,   the map  $\exp _y :B_r(0) \to B_r(y)$ is $(1+ 2\cdot   r^2)$-bilipschitz on the ball $B_r(0)\subset T_yY$.  We fix $r<\frac 1{10}$.

The ball $\bar B_r (y)\subset Y$  inherits the lower curvature bound  $-\frac 1 {100} \varepsilon ^2$ from the ball $\bar B_r(x) $, \cite[Proposition 3.1]{KL}. 
By Toponogov's theorem, 
the exponential map $\exp _y$ is $(1+\varepsilon ^2 r^2)$-Lipschitz  on $B_r(0)\subset T_yY$.
It remains to bound the Lipschitz constant of $\exp_ y^{-1}$ on $B_r(y)$.

Since $M$ is almost flat at $x$, 
the exponential map $\exp _x :B_r(0) \to B_r(x)$ is $(1+\varepsilon ^2 r^2)$-bilipschitz on the ball $B_r(0)\subset T_xM$, for any $r\leq 1$.

 We consider the ball $Q=B_r(0) \subset H^{x}$  in the horizontal space at $x$  and its exponential image 
$Z=\exp _x (Q)\subset M$.   For all $h\in Q\subset H^x$ we have, \cite[Proposition 7.3]{KL}:
$$\exp _y\circ D_xP (h) =P\circ \exp _x (h)\,.$$
 Thus, the map $P:Z\to Y$ can be written as
$$P=  \exp _y \circ D_xP   \circ \exp _x^{-1}\,.$$ 

The map $\exp _x^{-1}:Z\to Q$ is locally $(1+\varepsilon ^2 r^2)$-bilipschitz and 
the map $D_xP:H^x\to T_yY$ is a local submetry.  If we knew that $P:Z\to Y$ is locally 
$(1+  r^2)$-open, we would infer 
 that 
$\exp _y$ is locally $(1+2 \cdot   r^2)$-open.  
 Since $\exp _y$ is a homeomorphism and $(1+\varepsilon ^2 \cdot r^2)$-Lipschitz,  this would
prove that $\exp _y$ is locally $(1+2  \cdot r^2)$-bilipschitz.

It remains to prove that $P:Z\to Y$ is locally $(1+ r^2)$-open.

  Thus, consider any $h\in Q$ and $z=\exp _x(h)\in Z$.
 Set $t_0:= \varepsilon \cdot (r- |h|)$.
Let $z_1 = \exp _x (h_1) \in Z$ with $d(z,z_1) <t_0$ be given.  Set $y_1= P(z_1)$ and let
$y_2 \in Y$ be such that $t:= d(y_1,y_2)<t_0$.  We need to find a point $z_2\in Z\cap P^{-1} (y_2)$, such that  $d(z_1,z_2) \leq (1+ r^2) \cdot t$.

We set $L':=P^{-1} (y_2)$ and denote by $f$ the distance function $f:=d_{L'}$.
We are looking for $z_2\in Z$ with $f(z_2)=0$ and 
$d(z_1,z_2) \leq (1+  r^2) \cdot t$.

Since $P:M\to Y$ is a local submetry, we have 
$$f(z_1)=d(L', z_1)=d(y_1,y_2)=t\;.$$
Due to the open map theorem
 \cite[Lemma 4.1]{Lyt-open}, it suffices to prove that the absolut gradient of $-f$ on $Z$ at every point  $p\in Z\setminus L'$ is  at least $1-\frac 1 2  r^2$. 

 We fix a point  $p\in Z\setminus L'$ and a shortest geodesic from $p$ to $L'$. This geodesic is horizontal, since $L'$ is a fiber of $P$.   Let $u\in H^p$ be the starting direction of this geodesic. 

Due to Theorem \ref{thm: semicont} and the rescaling chosen above,  we find some unit vector $u'\in H^x$ with $|u-u'|\leq \frac 1 {5}  \cdot r$.   Consider $w:=\exp_x^{-1} (p)$ and   the  curve $\eta:[0,t_0]\to Z$ starting at $p$:
$$\eta (s):=\exp _x   (w+ s\cdot u')  \subset Z\;.$$ 
Due to  Corollary \ref{cor: buserkarcher}, any  starting direction  $\tilde u$ of $\eta $ at $p$ satisfies 
$$|\tilde u -u | \leq \frac 1  4 \cdot r\;.$$
Therefore, by the first formula of variation, we deduce that $-f$ grows at $p$ at least with velocity $$\cos \big(\frac 1 4 r \big) \geq 1-\frac 1 2 r^2\;.$$ 
This provides the right estimate for the absolute gradient of the function $-f:Z\to \R$ at $p$ and finishes the proof.
\end{proof}

\subsection{Applications}
As a consequence of Proposition \ref{prop: bil} we derive a local generalization of Corollary \ref{cor:  expstr}:

\begin{cor}
Let $P:M\to Y$ be a surjective local submetry. Then any startum   $Y^l$  of $Y$ is a manifold with locally bounded curvature.
\end{cor}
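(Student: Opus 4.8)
The plan is to reduce the statement to the local biLipschitz control of the exponential map $\exp_y$ on the stratum $Y^l$, combined with the fact that $Y^l$ carries a Lipschitz Riemannian metric with tangent space $T_yY^l$ the maximal Euclidean factor of $T_yY$ (see \secref{subsec: basestructure}). Fix a point $y_0 \in Y^l$. First I would pick $x_0 \in P^{-1}(y_0)$ lying in the $\mathcal C^{1,1}$-part of the fiber, and invoke \lref{lem: stratauni} to find a rescaling constant $\mu$ after which every point $z \in B_{10}(x_0) \cap S$ (where $S = P^{-1}(Y^l)$) is a $P$-almost flat point, and in particular every $y \in P(B_{10}(x_0)) \cap Y^l$ satisfies the hypotheses of \lref{lem: summ} with a uniform $r_1$.

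Next, for each such $y$, \pref{prop: bil} gives uniform constants $r_0$ and $C$ — depending only on the geometry bound of $M$, the injectivity radius of $Y$ along $Y^l$, and the vertical semicontinuity radius, all of which are locally uniform on $Y^l$ by \lref{lem: stratauni} and \cite[Theorem 11.1]{KL} — such that $\exp_y : B_{r}(0) \to B_r(y) \subset Y$ is $(1+Cr^2)$-biLipschitz on the ball in $T_yY$. The key point is that $\exp_y$ restricted to the Euclidean factor $T_yY^l \subset T_yY$ has image exactly $B_r(y) \cap Y^l$, since $Y^l$ is locally convex in $Y$ and geodesics of $Y$ tangent to the Euclidean factor stay in $Y^l$; hence the intrinsic distance on $Y^l$ between two nearby points $y_1, y_2$ agrees, up to a $(1+Cr^2)$ factor, with the Euclidean distance between their $\exp_y^{-1}$-preimages in $T_yY^l$. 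This exhibits $Y^l$, in the canonical distance-coordinate charts provided by \secref{subsec: basestructure}, as a length space in which small metric balls are $(1+Cr^2)$-biLipschitz to Euclidean balls, with $C$ locally uniform.

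From here I would run the standard comparison argument: a length space that is, near each point, $(1+Cr^2)$-biLipschitz to a Euclidean ball of radius $r$ has curvature bounded above and below by $\pm C'$ in the sense of Alexandrov for some $C' = C'(C)$ on a slightly smaller ball. Concretely, one compares the perimeters and angles of small geodesic triangles: a triangle of size $\sim r$ has its side lengths and vertex angles distorted by $O(Cr^2)$ relative to its Euclidean model, which forces the Alexandrov $(1+\varepsilon)$-comparison inequalities with two-sided curvature bound $O(C)$. Since we already know $Y^l$ is a topological manifold without boundary (of dimension $l$, \cite[Theorem 1.6]{KL}) with small convex balls, this yields precisely that $Y^l$ is a manifold with locally bounded curvature in the sense of \secref{subsec: curvature bounds}.

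The main obstacle I anticipate is the clean identification, at the level of intrinsic metrics, between the exponential picture $\exp_y : T_yY^l \to Y^l$ and the chart structure on $Y^l$: one must verify that the restriction of $\exp_y$ to the Euclidean factor really does parametrize a neighborhood of $y$ in $Y^l$ bijectively, and that the induced metric matches (up to $(1+Cr^2)$) the flat metric — this uses local convexity of $Y^l$, the splitting behaviour of $T_yY$, and a little care because $T_{y'}Y^l$ varies with $y'$. Once that compatibility is in hand, the passage from biLipschitz-to-Euclidean control to two-sided Alexandrov curvature bounds is routine. (Indeed, Theorem \ref{prop: bil} together with this argument is exactly the mechanism behind \cref{cor: expstr}, of which the present corollary is the local, not-necessarily-complete version, so the proof is essentially a localization of that one.)
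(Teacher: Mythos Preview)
Your setup is the same as the paper's: choose $x_0$ in the manifold part of the fiber, invoke \lref{lem: stratauni} together with \pref{prop: bil} to get a \emph{uniform} $(1+Cr^2)$-biLipschitz bound on $\exp_y$ for all $y$ in a neighbourhood of $y_0$ in $Y^l$, and note that the lower curvature bound is already inherited from $M$ by local convexity of $Y^l$. So only the upper bound is in question, and you correctly isolate this.

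Where your argument becomes too loose is the passage you label ``routine'': deducing CAT$(\kappa)$ from the biLipschitz estimate. Saying that a triangle of size $r$ has its side lengths and angles distorted by $O(Cr^2)$ relative to the Euclidean model does not by itself produce the correct \emph{sign} in the CAT$(\kappa)$ comparison; an $O(Cr^2)$ error could in principle go either way, and a bare biLipschitz map need not respect geodesics or midpoints. The paper does not treat this step as standard. Instead, it fixes a comparison sphere $\mathbb S^l$ of curvature $1$, identifies $T_yY^l$ with $T_{\bar y}\mathbb S^l$ via an isometry $I$, and considers the map
\[
f=\exp_{\bar y}\circ I\circ \exp_y^{-1}:B_1(y)\cap Y^l\ \longrightarrow\ B_1(\bar y)\subset \mathbb S^l.
\]
The point is that $f$ sends radial geodesics to radial geodesics and distance spheres to distance spheres of the same radius; on each sphere $\partial B_s$, the spherical exponential contracts by a factor $\le 1-\tfrac{1}{10}s^2$, which beats the expansion factor $\le 1+\varepsilon s^2$ of $\exp_y^{-1}$ after the normalisation $C=\varepsilon$. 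Hence $D_zf$ is $1$-Lipschitz at every point of differentiability, and a Rademacher--Fubini argument (using that $Y^l$ carries a Lipschitz Riemannian metric) promotes this to a global $1$-Lipschitz bound on $f$. That immediately yields the CAT$(1)$ angle comparison for triangles with a vertex at $y$, and since $y$ was arbitrary in the neighbourhood, the upper bound follows.

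So the missing idea in your sketch is precisely this: absorb the $(1+Cr^2)$ distortion of $\exp_y$ into the curvature of a round comparison sphere, rather than trying to read off a CAT inequality directly from a Euclidean comparison. Once you do that, the argument is short; without it, the ``standard comparison'' step is a genuine gap.
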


\begin{proof}
Let $y_0\in Y^l$ be arbitrary. 
% We may assume without loss of generality that $Y^l$ is
%the maximal stratum $Y^m$ and $Y=Y^l$. Indeed,
 Consider a point $x_0\in L=P^{-1}(y_0)$
such that a neighborhood of $y_0$ in $L$ is a $\mathcal C^{1,1}$-submanifold. 

Due to Proposition \ref{prop: bil} and Lemma \ref{lem: stratauni} we find some $r_0>0$ and $C>0$ such that the following holds true.  For any $y\in B_{r_0} (y_0)\cap Y^l$ and any $r<r_0$, the exponential map $\exp_y: B_r(0) \to B_r(y)$ is  $(1+C\cdot r^2)$-bilipschitz from the ball in the tangent space $B_r(0)\subset T_y Y$.

Upon rescaling, we may assume that $M$ is almost flat at $x_0$, that $r_0=10$  and  $C=\varepsilon$.   Due to Lemma \ref{lem: stratauni}, \ref{lem:  summ},
the ball $\bar B_{10} (y_0) \cap Y^l$ is compact and convex in $Y^l$.  It inherits the 
lower curvature bound $-\varepsilon ^2$ from  $B:=B_{10}(x_0)$.
It remains to prove that the convex subset $Z:=\bar B_1 (y_0)\cap Y^l$ is CAT(1).

Consider 3 points $y,p,q $ in this $Z$. Choose  $\bar y, \bar p, \bar q$ in the round sphere   $\mathbb S^l$ of dimension $l$, such that $d(y,p)=d(\bar y, \bar p)$, $d(y,q)=d(\bar y, \bar q)$ and $\angle pyq =\angle \bar p \bar y \bar q$.   We need to prove 
$d(p,q) \geq d(\bar p, \bar q)$.

Identify the tangent spaces $T_yY^l$ and $T_{\bar y} \mathbb S ^l$ through an isometry 
$I$, which sends the starting directions of $yp$ and $yq$ to the starting directions of $\bar y\bar p$ and $\bar y \bar q$ respectively.  It suffices to prove that  the map
$$f:=\exp _{\bar y} \circ I\circ \exp _y ^{-1} :B_{1}(y)\to B_{1}  (\bar y)$$
on the ball $B_1(y)\subset Y^l$ is $1$-Lipschitz.

Due to Theorem \ref{prop: bil} the map $f$ is bilipschitz.  By construction, $f$ sends   geodesics starting at $y$ to geodesics starting at $\bar y$.  Hence $f$ sends spheres around $y$ onto spheres around 
$\bar y$ of the same radius.  

The  restriction of $\exp _{\bar y}$ to  the  concentric sphere
$\partial B_s (0)$ in $T_{\bar y} \mathbb S^n$    is $(1-\frac 1 {10} s^2)$-Lipschitz, if we equipp 
this sphere with its intrinsic metric.  Thus, the restriction 
$f:\partial B_s (y) \to \partial B_s (\bar y)$ is $1$-Lipschitz, if both spheres are equipped with their intrinsic metrics. 

The bilipschitz map $f$ is differentiable almost everywhere with linear differential, by Rademacher's theorem.  By above, at any point $z$ at which $f$ is differentiable, the differential $D_z f$ is $1$-Lipschitz.  We claim that this is enough to conclude that $f$ is $1$-Lipschitz.

Indeed, the ball $B_{1}(y_0)$ can be considered as a Euclidean subset $O\subset \R^l$ with a Lipschitz continuous Riemannian metric.  For any vector $v$ in $\R^l$, Fubini's theorem implies that  for \emph{almost every}  segment $\gamma$ in $O$ in direction of $v$, the length
of $\gamma$ in $Y$ is not less than the length of $f\circ \gamma$ in $\mathbb S^n$. On segments parallel 
to $v$ in $O\subset Y$, the length functional is \emph{continuous} with respect to uniform
convergence.  On the other hand,  the length of the images $f\circ \gamma$  is (as always) 
lower semi-continuous.  Thus, by a limiting procedure, the length of $f\circ \gamma$
is not larger than the length of $\gamma$ for \emph{every} segment $\gamma$ in the direction of $v$.  Therefore, the map $f$ is $1$-Lipschitz and $B_{1} (y_0)$  has curvature at most $1$.
\end{proof}

Another consequence of Theorem \ref{prop: bil} is the following:

\begin{cor} \label{lem: locinf}
Let $P:M\to Y$ be a surjective local submetry as above.
 Let $y\in Y$ be an arbitrary point, let $r$ be smaller than the injectivity radius of $y$ and let 
$v\in T_yY$ be a vector with $|v| <r$.  Then the tangent cones  $T_v (T_yY)$ and 
$T_{\exp _y(v)} Y$ are isometric.

In particular, if $\exp _y (v)$ is contained in the $l$-dimensional stratum $Y^l$ then 
$v$ is contained in the $l$-dimensional stratum $(T_yY)^l$.
\end{cor}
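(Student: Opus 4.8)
The plan is to deduce the statement from \pref{prop: bil} by a rescaling/blow-up argument at the point $\exp_y(v)$. Write $p = \exp_y(v)$ and let $\gamma:[0,|v|]\to Y$ be the (unique) geodesic from $y$ to $p$ in direction $v/|v|$. By \pref{prop: bil}, there is a neighborhood of $y$ on which $\exp_y$ is $(1+Cr^2)$-bilipschitz; in particular $\exp_y$ restricts to a $(1+C\delta^2)$-bilipschitz homeomorphism from the metric ball $B_\delta(v)\subset T_yY$ (centered at $v$, not at the origin) onto $B_\delta(p)\subset Y$, because $B_\delta(v)\subset B_r(0)$ for $\delta$ small and $\exp_y$ is bilipschitz on all of $B_r(0)$. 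Thus $d(\exp_y(w_1),\exp_y(w_2)) \leq (1+C\delta^2)\,|w_1-w_2|$ and the reverse inequality with the same factor, for all $w_1,w_2\in B_\delta(v)$.

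First I would pass to tangent cones. Rescaling the metric on $T_yY$ by $1/\delta$ and letting $\delta\to 0$, the pointed spaces $\bigl(\tfrac1\delta T_yY, v\bigr)$ converge in the pointed Gromov--Hausdorff sense to $T_v(T_yY)$; likewise $\bigl(\tfrac1\delta Y, p\bigr)$ converges to $T_pY$, both limits existing because $Y$ and $T_yY$ have curvature locally bounded below and the tangent cone exists at every point of such a space. The maps $\exp_y: B_\delta(v)\to B_\delta(p)$, rescaled by $1/\delta$, become $(1+C\delta^2)$-bilipschitz maps between unit balls; since $C\delta^2\to 0$, any Gromov--Hausdorff limit map is an isometry. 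This produces an isometry $T_v(T_yY)\to T_pY$ between the tangent cones, which is the first assertion.

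For the ``in particular'' part, recall from \secref{subsec: basestructure} that membership of a point $q$ in the stratum $Y^l$ is detected purely by the tangent cone: $q\in Y^l$ iff $T_qY$ splits off $\mathbb R^l$ but not $\mathbb R^{l+1}$ as a metric direct factor. Since the splitting of a (metrized) cone into a Euclidean factor times a cone with no further Euclidean factor is unique (the Euclidean factor is the set of directions lying on a line through the apex — the ``linear part'' of the cone), an isometry $T_v(T_yY)\cong T_pY = T_qY$ preserves the maximal Euclidean factor and its dimension. Applying this with $q=p=\exp_y(v)$: if $\exp_y(v)\in Y^l$, then $T_{\exp_y(v)}Y$ has maximal Euclidean factor $\mathbb R^l$, hence so does $T_v(T_yY)$, i.e. $v\in (T_yY)^l$ by the same stratification applied to the Alexandrov (indeed manifold-with-locally-bounded-curvature-if-$Y$-is, but in any case lower-curvature-bounded) space $T_yY$.

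The main obstacle I expect is the passage to the limit: one must check that the rescaled exponential maps converge (along a subsequence) to a well-defined map between the tangent cones, and that a uniform limit of $(1+\epsilon_n)$-bilipschitz homeomorphisms of compact balls, with $\epsilon_n\to 0$, is an isometry onto its image and in fact surjective onto the limit ball. This is standard once one knows both tangent cones exist and are proper (which holds here, $T_yY$ and $Y$ being finite-dimensional Alexandrov spaces of curvature locally bounded below), but it is the step where the quantitative $(1+Cr^2)$ in \pref{prop: bil} — as opposed to merely ``$\exp_y$ is a homeomorphism'' from \cite{KL} — is genuinely used: the quadratic decay of the bilipschitz defect is exactly what forces the blow-up limit to be an isometry rather than merely bilipschitz. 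A slightly more hands-on alternative, avoiding Gromov--Hausdorff limits, is to argue directly that for sequences $w_n\to v$ the comparison angles at $v$ (in $T_yY$) and at $p$ (in $Y$) agree in the limit, using that $\exp_y$ distorts distances by a factor $1+O(\delta^2)$ on $B_\delta(v)$; either route gives the isometry of tangent cones, and the stratum statement then follows formally as above.
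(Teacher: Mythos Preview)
There is a genuine error in your first paragraph. \pref{prop: bil} says that $\exp_y$ is $(1+Cr^2)$-bilipschitz on the ball $B_r(0)$ \emph{centered at the origin} of $T_yY$. From $B_\delta(v)\subset B_r(0)$ with $r=|v|+\delta$ you only get that $\exp_y$ is $(1+C(|v|+\delta)^2)$-bilipschitz on $B_\delta(v)$, not $(1+C\delta^2)$-bilipschitz. When you rescale by $1/\delta$ and let $\delta\to 0$, the bilipschitz constant stays near $1+C|v|^2$ and does \emph{not} tend to $1$; your blow-up limit is only a $(1+C|v|^2)$-bilipschitz homeomorphism between the tangent cones, not an isometry. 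The quadratic improvement in \pref{prop: bil} is measured from the cone point $0$, not from $v$, and this is exactly the information you lose by blowing up at $v$ itself.

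The paper repairs this with one extra ingredient you are missing: constancy of tangent cones along the geodesic. By Petrunin's parallel transport theorem, $T_{\gamma_v(t)}Y$ is independent of $t\in(0,|v|]$; and trivially, since $T_yY$ is a metric cone, $T_{s\cdot v}(T_yY)$ is independent of $s>0$. Thus it suffices to compare $T_{s\cdot v}(T_yY)$ with $T_{\exp_y(s\cdot v)}Y$ for \emph{small} $s$. Now $s\cdot v$ lies in $B_{2s|v|}(0)$, so \pref{prop: bil} gives a $(1+C'(s|v|)^2)$-bilipschitz map on a neighborhood of $s\cdot v$ of radius comparable to $s|v|$; rescaling and letting $s\to 0$ (your blow-up argument, but now at $s\cdot v$ with $s\to 0$ rather than at the fixed $v$ with $\delta\to 0$) does produce an isometry. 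Your treatment of the ``in particular'' clause is fine once the isometry of tangent cones is established.
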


\begin{proof}
Consider the geodesic $\gamma _v:[0,r)\to Y$ in the direction of $v$ parametrized by arclength.    For $t\in (0,r)$, the tangent spaces at  $\gamma _v (t)$   do not depend on $t$,   \cite{Petruninpar}.  Moreover,  the tangent space $T_v (T_yY)$ in the Euclidean cone $T_yY$  is isometric  $T_{s\cdot v} (T_yY)$, for all $s>0$.

Due to Proposition \ref{prop: bil},  for small $s>0$,  a neighborhood of $(s \cdot v)$ in $T_yY$ is $(1+C s^2)$-bilipschitz to a neighborhood of $\exp _y(s\cdot v)$ in $Y$, for some $C$ independent of $s$.    Rescaling, letting $s$ go to $0$ and using that the tangent cones at $s\cdot v$ respectively at $\exp_y (s\cdot v)$ do not depend on $s$, we deduce the claim.
\end{proof}

\section{Transnormal submetries} \label{sec: trans}

\subsection{Horizontal geodesics}
Recall that a local submetry $P:M\to Y$ is  transnormal if all fibers  of $P$ are $\mathcal C^{1,1}$-submanifold of $M$.
A local submetry $P:M\to Y$ is 
%called 
 transnormal if and only if any local  geodesic  $\gamma :I\to M$ is horizontal once  $\gamma'(t)$ is horizontal for some $t\in I$,
 \cite[Proposition 12.5]{KL}.  In this case, 
for any horizontal local geodesic $\gamma :I\to M$, the projection $\bar \gamma := P\circ \gamma :I\to Y$ is 
%a quasi-geodesic in $Y$, which is
a discrete concatenation of geodesics in $Y$, \cite[Corollary 7.2]{KL}.

The following Lemma is  stated as  \cite[Proposition 12.7]{KL}  for global submetries, but the proof remains unchanged in the local case:
\begin{lem} \label{lem: eqproj}
Let $P:M\to Y$ be a transnormal local submetry. Let $\gamma _1, \gamma _2:I\to M$ be  horizontal local geodesics.  Set $\bar \gamma _i:= P\circ \gamma _i$.
Assume that, for some $t\in I$, we have $\bar  \gamma _1(t)=\bar  \gamma _2 (t) $ and $\bar  \gamma _1 '(t)=\bar  \gamma _2 ' (t) $.  
Then $\bar \gamma _1$ and $\bar \gamma _2$ coincide on $I$.
\end{lem}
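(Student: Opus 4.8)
The plan is to argue by connectedness of the interval $I$. I will show that the set
\[
J:=\{\, s\in I : \bar\gamma_1 \text{ and } \bar\gamma_2 \text{ coincide on a neighbourhood of } s \text{ in } I\,\}
\]
is nonempty, open and closed in $I$; this forces $J=I$ and hence $\bar\gamma_1\equiv\bar\gamma_2$. Openness is immediate from the definition, so the two things to establish are that $t\in J$ and that $J$ is closed in $I$.

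To see $t\in J$: the hypothesis in particular requires $\bar\gamma_i$ to be differentiable at $t$, which rules out a break of $\bar\gamma_i$ there, so on a small interval around $t$ each $\bar\gamma_i$ is an unbroken local geodesic, and by \cite[Proposition 7.3]{KL} it is of the form $s\mapsto\exp_{y_0}((s-t)\bar\gamma_i'(t))$ with $y_0=\bar\gamma_1(t)$. Since $\bar\gamma_1'(t)=\bar\gamma_2'(t)$ and geodesics inside the injectivity ball at $y_0$ are unique, the two curves coincide near $t$.

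The real work is closedness. Let $s_0\in\overline J\cap I$. Continuity gives $y_0:=\bar\gamma_1(s_0)=\bar\gamma_2(s_0)$, and since $J$ is open we may assume $\bar\gamma_1$ and $\bar\gamma_2$ agree on $(s_0-\delta_0,s_0]$ (the case of agreement on $[s_0,s_0+\delta_0)$ being symmetric). Choosing $s_1\in(s_0-\delta_0,s_0)$ beyond the last break of $\bar\gamma_i$ before $s_0$, the two curves coincide on the geodesic segment $[s_1,s_0]$ and in particular have a common incoming direction $h^-\in T_{y_0}Y$ at $s_0$. It then suffices to show that the \emph{outgoing} germ of $\bar\gamma_i$ at $s_0$ is determined by the pair $(y_0,h^-)$: that will give agreement of $\bar\gamma_1$ and $\bar\gamma_2$ also on $[s_0,s_0+\delta)$, hence $s_0\in J$. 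Writing $x_i:=\gamma_i(s_0)\in L:=P^{-1}(y_0)$ and $e_i:=\gamma_i'(s_0)$, transnormality and passage to the time-reversed geodesic make both $\pm e_i$ unit horizontal vectors at $x_i$, and \cite[Proposition 7.3]{KL} together with \cite[Corollary 7.2]{KL} gives, for small $r\ge 0$,
\[
\bar\gamma_i(s_0-r)=\exp_{y_0}\!\big(r\,D_{x_i}P(-e_i)\big),\qquad \bar\gamma_i(s_0+r)=\exp_{y_0}\!\big(r\,D_{x_i}P(e_i)\big).
\]
Thus $h^-=D_{x_i}P(-e_i)$ for $i=1,2$, while the outgoing direction of $\bar\gamma_i$ is $D_{x_i}P(e_i)$, and the whole lemma reduces to the following \emph{reflection-determinism} statement: whenever $x_1,x_2\in L$ and $e_1,e_2$ are unit vectors with $\pm e_i\in H^{x_i}$ and $D_{x_1}P(-e_1)=D_{x_2}P(-e_2)$, then $D_{x_1}P(e_1)=D_{x_2}P(e_2)$.

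I expect this last reduction to be the main obstacle, precisely because the two differentials sit over \emph{different} points of the fiber $L$. I would attack it by blowing up at $y_0$: after rescaling around each $x_i$ the submetry $D_{x_i}P\colon T_{x_i}M\to T_{y_0}Y$ is the infinitesimal model, the tangent cone $T_{y_0}Y$ being independent of the base point $x_i\in L$ (the relevant matching of local and infinitesimal structure being of the type recorded in \cref{lem: locinf}); its horizontal geodesics are the straight lines $\R e_i\subset H^{x_i}$, and $D_{x_i}P(\R e_i)$ is a geodesic line through the origin of $T_{y_0}Y$. The claim then becomes that such a line is determined by either one of its two rays, which I would prove in the conical model by induction on $\dim Y$, passing to the spaces of directions to reduce it to the same assertion in dimension $\dim Y-1$, the base cases (a point, or a half-line) being immediate. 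Granting this, the outgoing germ of $\bar\gamma_i$ at $s_0$ depends only on $(y_0,h^-)$, so $\bar\gamma_1$ and $\bar\gamma_2$ agree on a full neighbourhood of $s_0$; hence $s_0\in J$, which closes the argument and yields $J=I$.
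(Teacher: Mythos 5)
The open-and-closed reduction and the identification of the critical step are sound: you correctly observe that, given the horizontal lifts $\gamma_i$ with $x_i:=\gamma_i(s_0)\in L$ and $e_i:=\gamma_i'(s_0)$, the closedness of $J$ boils down to the implication
\[
D_{x_1}P(-e_1)=D_{x_2}P(-e_2)\ \Longrightarrow\ D_{x_1}P(e_1)=D_{x_2}P(e_2).
\]
This is exactly the content of \cite[Proposition~12.7]{KL} that the paper invokes (and uses again in the proof of \cref{cor: last}, in the guise of the $(-\Id)$-equivariance of the fiber decomposition of the horizontal unit sphere). So your reduction matches the substance of what is needed; it is your attempted proof of this reflection-determinism that has a real gap.

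The sketch you give rests on the assertion that $D_{x_i}P(\R e_i)$ is a \emph{geodesic line} through the origin of $T_{y_0}Y$, and that such a line is determined by either of its rays. The first claim is false in general: the image of a horizontal geodesic under a submetry is a quasi-geodesic, i.e.\ a concatenation of geodesic rays possibly meeting at an angle strictly less than $\pi$ at the vertex; this is precisely why $\bar\gamma=P\circ\gamma$ is described as a \emph{discrete concatenation} of geodesics rather than a geodesic. Concretely, for $\Z_2$ acting on $\R^2$ by $(u,v)\mapsto(u,-v)$ and $y_0$ a boundary point, $D_{x}P(\R e)$ with $e=(\tfrac1{\sqrt2},\tfrac1{\sqrt2})$ is the set $\{(t/\sqrt2,|t|/\sqrt2)\}$, a broken geodesic with a right angle at the origin. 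At a genuine break time $s_0$ of $\bar\gamma$ (which is exactly where the closedness argument has to do work), the image $D_{x_i}P(\R e_i)$ is such a broken path, not a line, so the ``a line is determined by one ray'' principle never applies. The subsequent ``induction on $\dim Y$ via spaces of directions'' is therefore aimed at the wrong target, and in any case is left unsubstantiated. What the argument actually needs is the statement, specific to transnormal submetries, that the antipodal map $-\Id$ of the unit horizontal sphere $K^{x}\subset H^{x}$ permutes the fibers of $D_{x}P\colon K^{x}\to\Sigma_{y_0}Y$, so that $-\Id$ descends to an isometry of $\Sigma_{y_0}Y$ that carries the common incoming direction to the common outgoing direction; this uses the $\mathcal C^{1,1}$ manifold structure of the fibers and is not a consequence of general cone geometry. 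Without it, the closedness step is not established, and the proof is incomplete.
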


We can now provide

\begin{proof}[Proof of Corollary \ref{cor: last}]
The statement is local. We may assume that $I$ is a compact interval $[a,b]$.
The projection $\bar \gamma :[a,b]\to Y$ is a finite concatenation of geodesics $\bar \gamma :[s_i,s_{i+1}] \to Y$, for $a=s_0<...<s_k=b$.

For all $t\in (s_i,s_{i+1})$ the 
tangent spaces $T_{\bar \gamma (t)} Y$ are pairwise isometric  \cite[Theorem 1.1]{Petruninpar}. By  Corollary \ref{lem: locinf}, they are also isometric to  $T_{\bar \gamma ^+(s_i)} (T_{\bar \gamma (s_i)} Y)$ and to $T_{\bar \gamma ^- (s_{i+1})} (T_{\bar \gamma (s_{i+1})} Y)$.
 Here and below $\bar \gamma ^{\pm} (s)$ denotes the outgoing and the incoming direction of $\bar \gamma $  in $T_{\bar \gamma (s)} Y$.

On the other hand, for any $t\in (s_i,s_{i+1})$, the incoming and the outgoing directions
$\bar \gamma ^{\pm (t)}$ are  contained in the line factor of $T_{\bar \gamma (t)} Y$.
Hence,  $T_{\bar \gamma '(t)} (T_{\bar \gamma (t)}  Y)$ is isometric to $T_{\bar \gamma (t)} Y$, for any such $t$.

It only remains to prove that for any $s=s_1,...,s_{k-1}$,  the two tangent cones 
$T_{\bar \gamma ^{\pm}(s)} (T_{\bar \gamma (s)} Y)$ are isometric two each other.  In order to prove this, it suffices to find, for any such $s$, an isometry $I: T_{\bar \gamma (s)} Y\to T_{\bar \gamma (s)} Y$ which sends $\bar \gamma ^{+}$ to $\bar \gamma ^{-}$.

In order to find  such $I$, we consider $x:=\gamma (s)\in M$ and the differential $D_{x} P: T_xM \to T_yY$. 
 The restriction of $D_xP$   to the unit sphere $K$ in the horizontal space $H^x$ is a transnormal submetry $D_xP:K\to \Sigma _y Y$, onto the space of directions at $y$ \cite[Proposition 12.5]{KL}.  

The incoming and the outgoing  directions $\gamma^{\pm} (t)\in K$ satisfy 
$\gamma^+(t)=-\gamma ^-(t)$ and $D_xP (\gamma ^{\pm } (t))= \bar \gamma ^{\pm} (t)$.

Due to  \cite[Proposition 12.7]{KL},
the decomposition of $K$ into the fibers of the submetry $D_xP$ is  equivariant under the multiplication of $K$ with $-1$.
Thus, $-Id: K\to K$ induces an isometry $\overline{- Id}:\Sigma _y Y\to \Sigma _y Y$.
The cone over this isometry $\overline {-Id}$ is the required isometry $I:T_y Y\to T_y Y$, which satisfies  $I(\bar \gamma ^{+})=\bar \gamma ^{-}$.

This proves the claim and implies that  the spaces $T_{\bar \gamma '(t)} (T_{\bar \gamma (t)} ) Y$ are pairwise  isometric.

Let now  $l$ denote the dimension of the maximal Euclidean factor of the pairwise isometric spaces 
 $T_{\bar \gamma '(t)} (T_{\bar \gamma (t)}  Y)$. Then, for all $t\neq s_0,s_1,...,s_k$ as above,  the iterated tangent cone   $T_{\bar \gamma '(t)} (T_{\bar \gamma (t)}  Y)$ is isometric to $T_{\bar \gamma (t)}  Y$. By definition, $\gamma (t)$ is contained in 
$Y^l$, for all such $t$.
\end{proof}

\subsection{Holonomy map along a horizontal geodesic}
Let $P:M\to Y$ be a transnormal local submetry. Let $\gamma :[a,b] \to M$ be a horizontal local geodesic
with projection $\bar \gamma =P\circ \gamma$.
Due to Corollary \ref{cor: last}, there exist some $1\leq l\leq m$, such that 
$\bar \gamma (t)\in Y^l$, for  all but finitely many times $t\in [a,b]$.
For $t\in [a,b]$, consider the  fiber
$$L^t:= L^{\gamma (t)} =P^{-1} (P(\gamma (t)))\;.$$

Set $S=P^{-1} (Y^l)$.
Let $t\in [a,b]$ be such that   $\bar \gamma (t) \in Y^l$.  Set $x=\gamma (t) \in L^{t} \subset S$. Then
 $S$ is a $\mathcal C^{1,1}$-submanifold of $M$ and the restriction $P:S\to Y^l$ is a $\mathcal C^{1,1}$ Riemannian submersion.  Therefore, the normal vector $\gamma'(t)\in T_x S$ to $L^t$ 
 extends to
a unique  locally Lipschitz continuous normal field $z\to \nu _z \in T_zS$ along $L^{t}$, such that 
$$D_zP (\nu _z)= D_xP (\nu _x)=D_xP (\gamma'(t))\;.$$

Denote by $Q^t$ the set of all   $z\in L^{t}$ such that the geodesic
$\gamma ^{z} :[a,b]  \to M$ with $(\gamma^{z}) ' (t)=\nu _z$ is defined.
Then $Q^t$ is  an open subset of $L^t$ and, if $M$ is complete, $Q^t=L^t$.
Due to Lemma \ref{lem: eqproj}, for all $z\in Q^t$ 
$$P\circ \gamma ^{z} =P\circ \gamma =\bar \gamma\;.$$
%by  Lemma \ref{lem: eqproj}.

For all $s\in [a,b]$ we   obtain a map $Hol^{\gamma}_{t,s} : Q^t\to L^s$, 
 \emph{the holonomy along $\gamma$}, 
 given as
$$Hol ^{\gamma } _{t,s} ( z):= \gamma ^z (s)\;.$$

Since $\nu$ and the exponential map on $M$ are locally Lipschitz, 
the map $Hol^{\gamma} _{t,s}$  is locally Lipschitz.  

 If $\gamma (s)\in Y^l$, then $Hol^{\gamma} _{t,s} (Q^t)=Q^s$ and 
 $Hol^{\gamma} _{t,s} $ and $Hol^{\gamma} _{s,t}$ are inverse to each other.
Thus,  $Hol^{\gamma} _{t,s} $ is locally bilipschitz in this case.

Let now $r\in [a,b]$ be arbitrary. Find some $s\in [a,b]$ such that $\bar \gamma (s) \in Y^l$ and $|s-r|$ is smaller than the injectivity radius at $\bar \gamma (r)$.
 Then 
$$Hol ^{\gamma} _{s,r} \circ Hol^{\gamma} _{t,s} = Hol^{\gamma} _{t,r}\;.$$
The  map $Hol^{\gamma}_{t,s}$ is locally bilipschitz, as we have seen above.
And the map $Hol^{\gamma} _{s,r} :Q^s\to L^r$ is the closest-point projection
to $L^r$.     Once $s$ has been chosen close enough to $r$, we can apply Theorem \ref{thm: semicont} and deduce that the map $Hol^{\gamma}_{s,r}:Q^s\to L^r$ is locally Lipschitz open. 

Alltogether we have verified the following

\begin{prop} \label{prop: hol}
In the notation above, the holonomy map  along $\gamma$
$Hol^{\gamma} _{t,r}: Q^t\to L^r$ is locally Lipschitz continuous and locally Lipschitz open.
If $\bar \gamma (r) \in Y^l$, then $Hol^{\gamma }_{t,r}$ is locally bilipschitz.
\end{prop}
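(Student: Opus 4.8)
The plan is to factor $Hol^{\gamma}_{t,r}$ through an intermediate time $s$ lying over the stratum $Y^l$, writing $Hol^{\gamma}_{t,r}=Hol^{\gamma}_{s,r}\circ Hol^{\gamma}_{t,s}$, so that the second factor is locally bilipschitz and the first factor is a closest-point projection to which \tref{thm: semicont} applies.

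First I would treat holonomy between two fibers both sitting over $Y^l$. If $\bar\gamma(t),\bar\gamma(s)\in Y^l$, then $\gamma(t)$ and $\gamma(s)$ lie on the $\mathcal C^{1,1}$-submanifold $S=P^{-1}(Y^l)$, over which $P$ restricts to a $\mathcal C^{1,1}$-Riemannian submersion; hence the normal fields $\nu$ along $L^{t}$ and along $L^{s}$ that define the holonomy are locally Lipschitz. Covering the interval between $t$ and $s$ by finitely many short subintervals and composing the corresponding exponential maps of $M$ — each locally Lipschitz since $M$ has locally bounded curvature — shows that both $Hol^{\gamma}_{t,s}$ and its inverse $Hol^{\gamma}_{s,t}$ are locally Lipschitz, so $Hol^{\gamma}_{t,s}$ is locally bilipschitz. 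Taking $s=r$ here already settles the last assertion of the Proposition.

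Next, for an arbitrary $r\in[a,b]$ I would use \cref{cor: last} to choose some $s$ with $\bar\gamma(s)\in Y^l$, with $|s-r|$ smaller than the injectivity radius of $Y$ at $\bar\gamma(r)$, and small enough that the restriction of every holonomy geodesic $\gamma^{z}$ to the interval between $s$ and $r$ is minimizing in $M$. Then, for $z$ near a fixed point of $Q^{s}$, the point $\gamma^{z}(r)$ realizes $d(\gamma^{z}(s),L^{r})=|s-r|$ and is the unique nearest point of $L^{r}$, so $Hol^{\gamma}_{s,r}$ coincides locally with the closest-point projection $\Pi^{L^{r}}$. Since $P$ is transnormal, a neighborhood of any point of $L^{r}$ in $L^{r}$ is a $\mathcal C^{1,1}$-submanifold, so \tref{thm: semicont}(3) applies and gives that $\Pi^{L^{r}}$, restricted to the relevant piece of $L^{s}$, is locally Lipschitz and locally Lipschitz open; hence so is $Hol^{\gamma}_{s,r}$. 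Composing with the locally bilipschitz map $Hol^{\gamma}_{t,s}$ yields that $Hol^{\gamma}_{t,r}$ is locally Lipschitz continuous and locally Lipschitz open.

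I expect the only genuinely delicate point to be the local identification of $Hol^{\gamma}_{s,r}$ with $\Pi^{L^{r}}$ for $s$ close to $r$: one must know that short arcs of the horizontal geodesics are minimizing and that points of $L^{s}$ near $\gamma(r)$ have a unique nearest point on $L^{r}$, which follows from the injectivity-radius bounds on $M$ and on $Y$ at $\bar\gamma(r)$ together with the positive reach of $L^{r}$. Everything else is a routine composition of locally Lipschitz, locally bilipschitz, and locally Lipschitz-open maps, essentially as in the discussion preceding the statement.
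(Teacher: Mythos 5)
Your proof is correct and follows essentially the same route as the paper: factor $Hol^{\gamma}_{t,r}$ as $Hol^{\gamma}_{s,r}\circ Hol^{\gamma}_{t,s}$ with $\bar\gamma(s)\in Y^l$ and $|s-r|$ small, observe that the first factor between fibers over $Y^l$ is locally bilipschitz (it and its inverse are built from locally Lipschitz data: the normal field $\nu$ on the $\mathcal C^{1,1}$-submanifold $S$ and the exponential map of $M$), identify the second factor $Hol^{\gamma}_{s,r}$ with the closest-point projection $\Pi^{L^r}$ on $Q^s$ for $s$ close to $r$, and invoke \tref{thm: semicont}(3). The only difference is that you spell out the minimality/uniqueness argument behind the identification $Hol^{\gamma}_{s,r}=\Pi^{L^r}$, which the paper leaves implicit; this is a harmless and welcome elaboration rather than a new idea.
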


\subsection{A uniform bound in terms of the volume}

We finally prove:

\begin{lem} \label{lem: uniformnon}
For any $n, k, \rho, \nu  >0 $ there exists  $r = r (n,k,\rho , \nu)>0$ with the following property.  Let $M$ be a  manifold with locally bounded curvature 
and let $P:M\to Y$ be a transnormal local submetry.  Let $n=\dim (M)$ and  $k=\dim (Y)$.
 Let the geometry of $M$ at $x$   be bounded by $\frac 1 {\rho}$ and  let the injectivity radius of $Y$ at $y=P(x)$ be at least $\rho$.  Let, finally, the volume of the ball
$ B_{\rho} (x) $ be at least   $\mathcal H^k (B_{\rho} (y))  \geq \nu\cdot \rho ^k$. Then 
the vertical semicontinuity radius of $P$ at $x$ is at least $r$.
\end{lem}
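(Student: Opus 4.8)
The plan is to argue by contradiction via a blow-up and a Gromov--Hausdorff compactness argument, and to reduce the statement to a rigidity property of submetries of Euclidean space. After rescaling by $1/\rho$ we may assume $\rho=1$. Suppose the asserted bound fails. Then there is a sequence of manifolds $M_j$ with locally bounded curvature, of transnormal local submetries $P_j\colon M_j\to Y_j$ with $\dim M_j=n$, $\dim Y_j=k$, and of points $x_j\in M_j$, $y_j=P_j(x_j)$, so that the geometry of $M_j$ at $x_j$ is bounded by $1$, the injectivity radius of $Y_j$ at $y_j$ is at least $1$, and $\mathcal H^k(B_1(y_j))\ge\nu$, but the vertical semicontinuity radius $r_1(j)$ of $P_j$ at $x_j$ tends to $0$. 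First I would note that, since the $P_j$ are transnormal, the fibers are $\mathcal C^{1,1}$-submanifolds, so the $\mathcal C^{1,1}$-part of the vertical semicontinuity radius is automatic; moreover the closest-point projection onto $L_j=P_j^{-1}(y_j)$ is uniquely defined near $x_j$ (the injectivity radius of $Y_j$ at $y_j$ being at least $1$), so by \lref{lem: radeschi} the submanifold $L_j$ has a universal $\mathcal C^{1,1}$-bound in a ball $B_{c_0}(x_j)$. Hence $r_1(j)\to0$ must be forced by the $\varepsilon$-comparison of vertical cones: there are points $z_j$ with $t_j:=d(x_j,z_j)\to0$ and unit vectors $v_j\in V^{x_j}$ with $|v_j-v'|>\varepsilon$ for all $v'\in V^{z_j}$.

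Next I would rescale $M_j$ by $\lambda_j:=1/t_j\to\infty$ and pass to the limit. Using the uniformly $\mathcal C^{1,1}$ distance coordinates around $x_j$ provided by the geometry bound (as in the proof of \lref{lem: radeschi}; see \cite{Ber-Nik}, \cite{KL-both}), one gets pointed convergence $(\lambda_jM_j,x_j)\to(\R^n,0)$, $(\lambda_jY_j,y_j)\to(Y_\infty,y_\infty)$, and convergence of the submetries to a submetry $P_\infty\colon\R^n\to Y_\infty$ (limits of submetries being submetries, \cite{KL}). The rescaled volume bound reads $\mathcal H^k(B_R(\widehat y_j))\ge\nu R^k$ for all $R\le\lambda_j$, so in the limit $\mathcal H^k(B_R(y_\infty))\ge\nu R^k$ for all $R>0$; thus $Y_\infty$ is a non-collapsed $k$-dimensional Alexandrov space of curvature $\ge0$ with positive asymptotic volume ratio, and, since the rescaled injectivity radii at $\widehat y_j$ tend to $\infty$, the injectivity radius of $Y_\infty$ at $y_\infty$ is infinite. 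The rescaled fibers $\lambda_jL_j$ have reach tending to $\infty$ and, along a subsequence, fixed dimension, so they converge to a linear subspace $V\subseteq\R^n$, which is the fiber of $P_\infty$ through $0$; moreover $\widehat z_j\to z_\infty$ with $|z_\infty|=1$ and $\widehat v_j\to v_\infty$, a unit vector in $V$. Using that the volume lower bound keeps $y_j$ uniformly interior in its stratum — so that, by \cite[Lemma 10.1, Theorem 11.1]{KL} together with \lref{lem: radeschi}, the fibers $L_j^{z_j}$ through the $z_j$ are $\mathcal C^{1,1}$-submanifolds with uniform reach — the vertical spaces $V^{z_j}$ converge to the tangent space $W$ at $z_\infty$ of the fiber of $P_\infty$ through $z_\infty$, and passing the defect to the limit (parallel transport being trivial in $\R^n$) gives $d(v_\infty,W)\ge\varepsilon$.

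Finally I would derive a contradiction from the structure of $P_\infty$, and this is where I expect the genuine obstacle to be. One must show that $Y_\infty$ — being $k$-dimensional, non-negatively curved, with positive asymptotic volume ratio and infinite injectivity radius at $y_\infty$ — is isometric to its tangent cone at $y_\infty$, i.e.\ is a Euclidean cone with apex $y_\infty=P_\infty(0)$; and then that a submetry $P_\infty\colon\R^n\to Y_\infty$ onto such a cone whose fiber through $0$ is a linear subspace $V$ is ``untwisted'', in the sense that every fiber is invariant under translation by $V$, so that $V\subseteq W$. (The obstruction to dropping the volume hypothesis, such as a screw motion over a paraboloid, is exactly what the cone rigidity rules out.) This forces $d(v_\infty,W)=0$, contradicting the previous paragraph and proving the lemma. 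The first half of this rigidity is of Cheeger--Colding type; the second half should follow from the infinitesimal theory of submetries of Euclidean space and of Euclidean cones in \cite{KL}. Making these two points precise — and verifying that the reach and interior-of-stratum estimates used in the second paragraph are genuinely uniform in the data $n,k,\nu$ — is the technical heart of the argument.
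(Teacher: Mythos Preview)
Your overall plan --- contradiction, compactness, limit submetry --- is right, but the paper takes a much shorter route that bypasses the rigidity step you flag as ``the technical heart''. The paper does \emph{not} blow up by $1/t_j$; it rescales only by a fixed factor (so that $\rho=\varepsilon$) and passes to a limit at that scale. The key move is to dualize: from $d(v_j,V^{z_j})>\varepsilon$ one extracts a unit \emph{horizontal} vector $h_j\in H^{z_j}$ with $\angle(h_j,\tilde v_j)<\tfrac{\pi}{2}-\varepsilon$. Transnormality makes the entire geodesic $\gamma_j:[0,1]\to M_j$ in direction $h_j$ horizontal, so $P_j\circ\gamma_j$ is a quasi-geodesic in $Y_j$. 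The volume hypothesis is used exactly once: it makes the convergence $\bar B_{10}(y_j)\to Z$ non-collapsed, and quasi-geodesics are stable under non-collapsed limits. Since $z_j\to x$ at this fixed scale, the limit geodesic starts at $x$ with horizontal initial direction $h\in H^x$; meanwhile $v_j\to v\in V^x=T_xL$ by the uniform $\mathcal C^{1,1}$ bound on $L_j$ near $x_j$ (this part you had). The inequality $\angle(h,v)\le\tfrac{\pi}{2}-\varepsilon$ between a horizontal and a vertical vector is the contradiction.

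Your blow-up route carries two gaps that are more than technical. First, the claim that $V^{z_j}$ converges to $W=V^{z_\infty}$ is unjustified: semicontinuity gives only $W\subseteq\liminf V^{z_j}$, and the reach of the fiber through $z_j$ is governed by the injectivity radius of $Y_j$ at $P_j(z_j)$, for which you have no lower bound comparable to $t_j$ --- the volume bound at $y_j$ does not keep $P_j(z_j)$ uniformly interior to its stratum. Without the opposite inclusion you cannot pass $d(v_j,V^{z_j})>\varepsilon$ to $d(v_\infty,W)\ge\varepsilon$. Second, the cone rigidity is genuinely open in your sketch: infinite injectivity radius at $y_\infty$ together with curvature $\ge 0$ does not force a cone (the screw-motion quotient you yourself mention has base $dr^2+\tfrac{r^2}{r^2+1}\,d\phi^2$, a cigar with infinite injectivity radius at its tip and a line as the fiber over the tip); the volume hypothesis rules out that example, but converting positive asymptotic volume ratio into ``$Y_\infty$ is a metric cone'' is a volume-cone-implies-metric-cone theorem you have not supplied, and the subsequent $V$-invariance of all fibers of $P_\infty$ is yet another step. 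The dualization to $H^{z_j}$ plus quasi-geodesic stability avoids both problems entirely.
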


\begin{proof}[Proof of Lemma  \ref{lem: uniformnon}]
Assume the contrary and let $P_j:(M_j,x_j)\to (Y_j,y_j)$ be a contradicting sequence. Thus, the vertical semicontinuity radii $r_j$ of $P_j$ at $x_j$  converge to $0$.  Hence, there exist  $z_j\in M_j$ with $s_j=d(z_j,x_j) \to 0$ and unit vectors $v_j\in V^{x_j}$ 
such that $d(v_j,V^{z_j}) >\varepsilon$.

Hence, there exists a unit horizontal vector $h_j\in H^{z_j}$ such that $$\angle (h_j, \tilde v_j) < \frac \pi 2  -\varepsilon\,,$$
where $\tilde v_j$ is the parallel translation of $v_j$ to $z_j$.

Upon rescaling we may assume that $\rho=\varepsilon$.  In particular, $M_j$ are almost flat at $x_j$. Choosing a subsequence we may assume that $\bar B_{10} (x_j)$ converge 
to a space $M^{\infty}$ which is a closed ball of radius $10$ around the limit point $x$ of the sequence $x_j$.  Moreover, we may assume that the balls $\bar B_{10} (y_j)$ converge to an Alexandrov space $Z$ and that the restrictions of $P_j$ converge to a $1$-Lipschitz  map  $P:M^{\infty} \to Z$.   The open ball $B=B_{10} (x)$ is again a manifold with 
locally bounded curvature and the restriction of $P$ to $B$
is a local submetry onto the ball $B_{10} (y)\subset Z$.

Consider now geodesic $\gamma _j:[0,1] \to M_j$ starting in $z_j$ in the  direction of $h_j$.
Then $\gamma _j$ is a horizontal curve, by the definition of transnormality. The images $P_j (\gamma _j)$ are quasi-geodesics in $Y_j$, \cite[Proposition 3.2]{KL}.  Since the convergence $\bar B_{100} (y_j) \to Z$ is non-collapsed by the volume assumption, the limit  of the curves $P_j(\gamma _j)$ is a quasigeodesic in $Z$ \cite[Section 5.1(6)]{Petrunin-semi}.
Therefore, the limit geodesic $\gamma_{\infty}:[0,1]\to M^{\infty}$ starting in $x$ is horizontal.
Thus, its starting direction $h\in T_x M^{\infty}$ is horizontal.  

On the other hand, the fibers $L_j:=P_j^{-1} (y_j)$ converge to $L= P_j^{-1} (y)$,  \cite[Lemma 2.4]{KL}.
Since the manifolds $L_j$ are uniformly $\mathcal C^{1,1}$ in distance coordinates
by \eqref{eq: cr}, the tangent space $T_{x_j}L_j$ converge to $T_xL$.  Thus, any limit vector $v$ of $v_j$ is contained in $T_xL$. By assumptons on $h_j$ and $v_j$, the angle between the vertical vector $v$ and the horizontal vector $h$ is at most $\frac \pi 2 -\varepsilon$. This contradiction finishes the proof.
\end{proof}

As a direct consequence we obtain:

\begin{cor} \label{cor: uniformity}
For transnormal submetries $P:M\to Y$ the constants $C,r_0$ appearing in Theorem \ref{thm: exp}, Theorem \ref{thm: semicont}, Proposition \ref{prop: bil}   and Proposition \ref{prop: almsubm} depend only on a bound of the geometry of $M$ around  $L$, the injectivity radius of $Y$ at $y:=P(L)$ and
a lower volume bound of a ball in $Y$ around $y$.
\end{cor}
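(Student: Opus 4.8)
The plan is to notice that each of the four results invoked in the statement --- Theorem~\ref{thm: exp}, Theorem~\ref{thm: semicont}, Proposition~\ref{prop: bil} and Proposition~\ref{prop: almsubm} --- already carries an explicit description of the data on which its constants $r_0,C$ depend, and that for a \emph{transnormal} submetry this data reduces, via Lemma~\ref{lem: uniformnon}, precisely to the three quantities listed. Concretely, in Theorem~\ref{thm: semicont} the constants depend only on a bound of the geometry of $M$ at $x_0$, a lower bound for the injectivity radius of $Y$ at $P(x_0)$, and the vertical semicontinuity radius of $P$ at $x_0$; the same three numbers control the constant $r_1$ of Lemma~\ref{lem: summ}, hence the constants $r_0=r_1/\mu$ and $C=\mu r_1^2$ of Proposition~\ref{prop: bil}, and therefore also the constants of Theorem~\ref{thm: exp}, which is obtained from Proposition~\ref{prop: bil} by rescaling. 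So it suffices to bound the vertical semicontinuity radius of $P$ from below in terms of the admissible data, at any point $x$ of a fiber $L=P^{-1}(y)$ --- and for transnormal $P$ every such $L$ is a $\mathcal C^{1,1}$-submanifold, so the vertical semicontinuity radius is defined at every point of it.

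This lower bound is exactly what Lemma~\ref{lem: uniformnon} provides. I would fix $n=\dim M$, $k=\dim Y$, a number $\rho>0$ such that the geometry of $M$ is bounded by $\frac1\rho$ at every point of $L$ and the injectivity radius of $Y$ at $y=P(L)$ is at least $\rho$, and a constant $\nu>0$ with $\mathcal H^k(B_\rho(y))\ge\nu\rho^k$. Then Lemma~\ref{lem: uniformnon} yields $r=r(n,k,\rho,\nu)>0$ which is a lower bound for the vertical semicontinuity radius of $P$ at \emph{every} $x\in L$. Substituting this uniform $r$ back into the dependence clauses recalled above produces constants $r_0,C$ in Theorem~\ref{thm: exp}, Theorem~\ref{thm: semicont} and Proposition~\ref{prop: bil} that depend only on $n,k,\rho,\nu$, i.e.\ only on a bound of the geometry of $M$ around $L$, the injectivity radius of $Y$ at $y$, and the volume lower bound.

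For the global Proposition~\ref{prop: almsubm} I would argue as in its original proof: cover the compact fiber $L$ by the balls $B_{r_0}(x)$, $x\in L$, furnished by Theorem~\ref{thm: semicont}, extract a finite subcover, and choose a tubular neighborhood $B_{r_0'}(L)$ inside the union, on which the pointwise Lipschitz and openness estimates of Theorem~\ref{thm: semicont}(3) assemble into the global ones; by the previous paragraph the radii $r_0$ are now uniformly bounded below by a function of $n,k,\rho,\nu$, so the resulting global $C,r_0'$ again depend only on the admissible data. The whole argument is thus bookkeeping on top of the earlier results; the one genuinely substantive point is the uniform control of the vertical semicontinuity radius, and in particular the necessity of the volume (non-collapsing) hypothesis --- without it the vertical spaces can fail to be semicontinuous at a uniform scale, and indeed $r_0$ and $C$ degenerate near a lower stratum, as recalled in the introduction. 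That point is precisely Lemma~\ref{lem: uniformnon}, which we are allowed to assume, so no further difficulty remains.
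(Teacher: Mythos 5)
Your proposal is correct and matches the paper's intended argument: the paper introduces the corollary with ``As a direct consequence we obtain,'' meaning exactly the bookkeeping you carry out --- Lemma~\ref{lem: uniformnon} gives a uniform lower bound on the vertical semicontinuity radius from the non-collapsing data, and this is then fed into the explicit dependence clauses of Theorem~\ref{thm: semicont}, Lemma~\ref{lem: summ}/Proposition~\ref{prop: bil}/Theorem~\ref{thm: exp}, and (via the finite covering of the compact fiber) Proposition~\ref{prop: almsubm}.
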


\bibliographystyle{alpha}
\bibliography{regular}

\end{document}